\newtheorem*{named1}{Theorem~1}
\newtheorem*{named2}{Theorem~2}
\newtheorem{lemma}{Lemma}
\newtheorem{thm}[lemma]{Theorem}
\newtheorem{cor}[lemma]{Corollary}
\newtheorem{claim}{Claim}
\newtheorem{observation}{Observation}
\theoremstyle{definition}
\newtheorem{conj}{Conjecture}
\newtheorem{defn}{Definition}
\def\card#1{\left|#1\right|}
\def\A{\mathcal{A}}
\def\B{\mathcal{B}}
\def\C{\mathcal{C}}
\def\E{\mathcal{E}}
\def\G{\mathcal{G}}
\def\H{\mathcal{H}(G)}
\def\S{\mathcal{S}}
\def\adj{\leftrightarrow}
\def\nonadj{\not\leftrightarrow}
\def\floor#1{\lfloor #1\rfloor}
\def\Floor#1{\left\lfloor #1\right\rfloor}
\def\ceil#1{\lceil #1\rceil}
\newcommand{\set}[1]{\left\{ #1 \right\}}
\newcommand{\irange}[1]{\left[#1\right]}
\newcommand{\join}[2]{#1 \mbox{\hspace{2 pt}$\vee$\hspace{2 pt}} #2}
\newcommand{\IN}{\mathbb{N}}
\newcommand{\parens}[1]{\left( #1 \right)}
\newcommand{\setbs}[2]{\left\{ #1 \mid #2 \right\}}
\newcommand{\size}[1]{\left\Vert#1\right\Vert}
\newcommand{\brackets}[1]{\left[ #1 \right]}
\renewcommand{\sp}{\operatorname{sp}}
\newcommand{\esp}{\operatorname{esp}}
\begin{document}
\title{
Graphs with $\chi=\Delta$ have big cliques
}
\author{Daniel W. Cranston\thanks{Department of Mathematics and Applied
Mathematics, Virginia Commonwealth University, Richmond, VA, 23284. email:
\texttt{dcranston@vcu.edu}} \and 
Landon Rabern\thanks{Lancaster, PA, 17601.  email:
\texttt{landon.rabern@gmail.com}. 
}}
\maketitle
\date


\abstract{
Brooks' Theorem implies that if a graph has $\Delta\ge 3$ and
and $\chi > \Delta$, then $\omega=\Delta+1$.
Borodin and Kostochka conjectured that if $\Delta\ge 9$ and $\chi\ge \Delta$,
then $\omega\ge \Delta$.  We show that if $\Delta\ge 13$ and $\chi\ge \Delta$,
then $\omega \ge \Delta-3$.
For a graph $G$, let $\H$ denote the subgraph of $G$ induced by vertices of
degree $\Delta$.  We also show that if $\chi\ge \Delta$, then $\omega\ge
\Delta$ or $\omega(\H)\ge \Delta-5$. 
}

\section{Introduction}
Our goal in this paper is to prove the following two main results.
For a graph $G$, we write $\Delta(G)$, $\omega(G)$, and $\chi(G)$ to denote
(respectively) the maximum degree, clique number, and chromatic number of $G$. 
When the context is clear, we simply write $\Delta$, $\omega$, and $\chi$.

\begin{thm}
\label{main1}
If $G$ is a graph with $\chi\ge\Delta\ge 13$, then $\omega\ge \Delta-3$.
\end{thm}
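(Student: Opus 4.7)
The plan is to establish the contrapositive: if $\Delta\ge 13$ and $\omega\le\Delta-4$, then $\chi\le\Delta-1$. Let $G$ be a vertex-minimal counterexample. Then $G$ is vertex-critical with $\chi(G)\ge\Delta$, so $\delta(G)\ge\Delta-1$, and every vertex of $G$ has degree exactly $\Delta-1$ or $\Delta$. The goal is to derive a clique on $\Delta-3$ vertices, contradicting $\omega(G)\le\Delta-4$.

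First I would fix a vertex $v$ of degree $\Delta-1$ (if one exists) and a $(\Delta-1)$-coloring $\varphi$ of $G-v$. Because $\varphi$ does not extend to $v$, the $\Delta-1$ colors biject with the neighbors of $v$, say $u_1,\dots,u_{\Delta-1}$ with $\varphi(u_i)=i$. A standard Kempe-chain analysis shows that for every pair $i\ne j$ the vertices $u_i$ and $u_j$ lie in a common $(i,j)$-component of $\varphi$, since otherwise swapping colors on one chain would free a color at $v$. Iterating this type of argument (and exploiting criticality on the edges removed when swapping) forces many adjacencies among $u_1,\dots,u_{\Delta-1}$, producing a dense almost-clique inside $N(v)$. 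If instead every vertex of $G$ has degree $\Delta$, I would apply a Mozhan-style partition argument: take a partition of $V(G)$ into $\Delta-1$ classes minimizing an appropriate potential (e.g., total edges within classes), and analyze a non-trivial component in some class to obtain the same kind of dense local structure, via the same Kempe-swap primitives.

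The decisive step, and the main obstacle, is upgrading the almost-clique on $N(v)$ into an honest $(\Delta-3)$-clique. A bare Kempe-chain argument typically produces cliques of size only about $\lceil\Delta/2\rceil$, and pushing the size up to $\Delta-3$ requires cascading swaps across several colors at once, together with a careful accounting of which pairs of neighbors can remain non-adjacent. I expect to use the $\omega\le\Delta-4$ hypothesis as a contradictory constraint on these cascades: each missing edge in $N(v)$ must be traceable to a specific obstruction, and the obstructions cannot coexist in sufficient numbers without themselves forming a forbidden clique. The hypothesis $\Delta\ge 13$ provides enough slack to absorb the small additive constants that appear during these swaps and to rule out a short list of small exceptional configurations where the counting would otherwise fail, thereby pinning down the required $(\Delta-3)$-clique and completing the contradiction.
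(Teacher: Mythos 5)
There is a genuine gap, and it sits exactly where you flag ``the main obstacle'': you never actually produce the $(\Delta-3)$-clique. Your first two paragraphs only set up standard machinery (criticality, the bijection between colors and neighbors of a low vertex $v$, Kempe-chain connectivity between color classes in $N(v)$), and as you yourself note, this machinery is known to yield cliques only of size roughly $\lfloor(\Delta+1)/2\rfloor$ --- that is the original Borodin--Kostochka bound. The entire content of the theorem is the passage from that to $\Delta-3$, and your third paragraph replaces the argument with intentions: ``cascading swaps,'' ``careful accounting,'' obstructions that ``cannot coexist in sufficient numbers.'' No mechanism is given for tracing a missing edge in $N(v)$ to an obstruction, nor for counting the obstructions, so the proof does not close. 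Your mention of a Mozhan-style partition into $\Delta-1$ classes (singleton color classes) also misses the point of that technique: the gain comes precisely from grouping the $\Delta-1$ color classes into a few \emph{clubhouses} of 3 or 4 classes each, so that the ``active'' component is a small clique ($K_4$ or $K_5$) that can be shuttled between clubhouses; with singleton classes the method degenerates.

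For comparison, the paper's route is structurally different in two ways you would need to supply. First, the clique is built not inside $N(v)$ for a fixed coloring but by a dynamic process on a Mozhan $(3,3,3,3)$-partition: repeatedly moving a vertex of the active $K_4$ into another clubhouse where it has exactly 3 neighbors, and showing (via three claims about clubs becoming ``complete'' to one another) that when the process halts, the active $K_4$ is joined to two disjoint $K_3$'s that are in turn joined to each other, giving $K_{10}$ when $\Delta=13$. Second, the general case $\Delta\ge 14$ is handled by induction on $\Delta$: a hitting-set lemma (proved via $d_1$-choosability of $\join{K_t}{B}$ and the Lopsided Transversal Lemma) supplies an independent set meeting every maximum clique, whose removal drops $\Delta$ by one while preserving $\chi\ge\Delta$ and $\omega\le\Delta-4$, reducing eventually to $\Delta=13$. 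Your proposal contains neither the termination analysis of such a process nor any reduction across values of $\Delta$, so even granting the setup, the theorem is not proved.
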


\begin{thm}
\label{main2}
Let $G$ be a graph and let $\H$ denote the subgraph of $G$ induced by
vertices of degree $\Delta$.  If $\chi\ge \Delta$, then $\omega\ge \Delta$ or
$\omega(\H)\ge \Delta-5$.
\end{thm}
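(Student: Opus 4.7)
The plan is to argue by contradiction. Suppose $G$ is a counterexample, so $\chi(G) \geq \Delta(G)$, $\omega(G) \leq \Delta(G) - 1$, and $\omega(\mathcal{H}) \leq \Delta(G) - 6$. By Brooks' theorem I may assume $\chi(G) = \Delta(G) =: k$, since $G = K_{k+1}$ would give $\omega(G) = k+1 \geq k$. By replacing $G$ with a suitable $\chi$-critical subgraph, I may assume $G$ itself is $k$-vertex-critical: the only way $\Delta$ can drop in passing to a critical subgraph is if Brooks forces a $K_{k+1}$, contradicting $\omega \leq k-1$. Vertex-criticality forces $\delta(G) \geq k-1$, so every vertex is either \emph{low} of degree $k-1$ or \emph{high} of degree $k$; write $\mathcal{H}$ for the induced subgraph on the high vertices. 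Gallai's theorem tells me the low vertices induce a subgraph whose blocks are cliques or odd cycles, which tightly restricts how each low vertex can attach to $\mathcal{H}$.

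The main step is to produce a $K_{k-5}$ inside $\mathcal{H}$. I would pick a high vertex $v$ lying in a maximum clique of $\mathcal{H}$, take a proper $(k-1)$-coloring $\varphi$ of $G - v$, and analyze the color classes on $N(v)$. Since $|N(v)| = k$ but only $k-1$ colors are available, two neighbors of $v$ share a color; running Kempe-chain exchanges between color pairs among the neighbors of $v$ forces many of those neighbors to be pairwise adjacent, since otherwise a sequence of swaps would free a color at $v$ and extend $\varphi$, contradicting $\chi(G) = k$. This produces a clique $K \subseteq N(v) \cup \{v\}$ of size close to $k$.

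The main obstacle is to ensure that all but at most five vertices of this clique lie in $\mathcal{H}$. A low vertex $u \in K$ has $\deg(u) = k-1$ with almost all of its neighbors inside $K$, hence very few outside; together with the Gallai-forest restriction on the low subgraph, this should let me run a secondary recoloring at each low $u \in K$ either to trade it for a high vertex in $K$ or to expose a reducible configuration. The delicate accounting is landing on exactly $-5$: a generic Kempe-chain argument naturally yields $\omega(\mathcal{H}) \geq k - O(1)$, but sharpening to $k-5$ will require tracking how many low vertices can simultaneously resist being swapped out, most likely via a coordinated two- or three-chain exchange that mirrors and refines the analysis used to obtain $\omega \geq \Delta - 3$ in Theorem~\ref{main1}.

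For small $\Delta$ the conclusion is easy. If $\Delta \leq 6$, then $\omega(\mathcal{H}) \geq 1 \geq \Delta - 5$ since any graph with $\chi \geq \Delta$ has at least one $\Delta$-vertex. For $7 \leq \Delta \leq 12$, a short direct argument combining Brooks' theorem with the Gallai structure of low vertices should suffice, before the main structural machinery takes over for $\Delta \geq 13$, where Theorem~\ref{main1} can additionally supply $\omega \geq \Delta - 3$ as a starting point and reduce the task to showing that a $K_{\Delta - 3}$ in $G$ can fail to lie in $\mathcal{H}$ on at most two vertices.
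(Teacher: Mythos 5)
Your proposal is a plan rather than a proof, and the central engine is missing. The step ``running Kempe-chain exchanges between color pairs among the neighbors of $v$ forces many of those neighbors to be pairwise adjacent'' is exactly the classical single-vertex argument of Borodin--Kostochka, and it is known to yield only $\omega\ge\lfloor(\Delta+1)/2\rfloor$: a repeated color class in $N(v)$ gives you two nonadjacent neighbors whose Kempe chains must interact, but iterating swaps around one fixed vertex does not force a clique of size $\Delta-O(1)$. To reach $\omega\ge\Delta-4$ (the paper's Corollary~\ref{weak1}, which is the true starting point) one needs the Mozhan rotation: partition the $\Delta-1$ color classes into clubhouses of 3 or 4 classes each, minimize edges within parts, and then repeatedly move a vertex of the active $K_{r_j+1}$ component into another part, tracking the process over many steps (Claims 1--3 of Theorem~\ref{easy}). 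Nothing in your sketch performs this iteration, and without it the clique you build is far too small. Your fallback for $7\le\Delta\le 12$ (``a short direct argument \dots should suffice'') is likewise unsupported; in the paper these cases are not easier --- they are handled by the very same partition machinery, writing $\Delta-1$ as a sum of 3's and 4's.

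The second, equally serious gap is the passage from a large clique in $G$ to a large clique in $\mathcal{H}$. You correctly identify this as ``the delicate accounting,'' but then defer it to ``a coordinated two- or three-chain exchange'' that is never described. In the paper this is where the real work happens: the sending-out process is refined to prefer high vertices, one shows that if the clique of size $\ge\Delta-4$ found by the process contains two low vertices then an unmoved low vertex is always available to continue the process, and the terminal configuration is killed by the list-coloring lemmas for $\join{K_4}{E_2}$ and $\join{K_3}{E_2}$ (Lemmas~\ref{mixedLemmaK4} and~\ref{mixedLemmaK3}), which force the terminal clubgroup together with its neighbor club to induce $K_\Delta$. None of these ingredients --- the high-before-low preference rule, the clubgroup analysis, or the $d_1$-choosability-style reducible configurations --- appears in your outline, and the Gallai structure of the low-vertex subgraph that you invoke instead is not strong enough to substitute for them.
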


The proofs of Theorems~\ref{main1} and \ref{main2} are
both somewhat detailed, so we first prove Theorem~\ref{firstthm}, which 
plays a central role in proving our two main theorems.
(For a less formal and less notationally dense presentation of these results,
see an earlier version of this paper~\cite{big-cliques-arxiv1}.)
Brooks' Theorem states that if $G$ is connected and $\chi > \Delta$, then
$G$ is a complete graph on $\Delta+1$ vertices (in particular,
$\omega=\Delta+1$) or $G$ is an odd cycle;
so if $\Delta\ge 3$, then
$\chi>\Delta$ implies $\omega=\Delta+1$.  Thus, the interesting case of
Theorems~\ref{main1} and \ref{main2} is when $\chi=\Delta$.

\begin{thm}
\label{firstthm}
If $G$ is a graph with $\chi \ge \Delta$, then $\omega \ge \Delta - 3$ if $\Delta \equiv 1\pmod 3$ and $\omega \ge \Delta - 4$ otherwise.
\end{thm}

When $\Delta=13$, Theorem~\ref{firstthm} implies that either $G$ is
12-colorable or $G$ contains a $K_{10}$.  This result will serve as the base
case for a proof of Theorem~\ref{main1} by induction on $\Delta$.  To prove
Theorem~\ref{main2}, we will further analyze the proof of
Theorem~\ref{firstthm}, and show that we can continue a certain recoloring
process unless $\H$ contains a big clique.

Borodin and Kostochka~\cite{BK} conjectured in 1977 that if $G$ is a graph with
$\Delta\ge 9$ and $\omega\le\Delta-1$, then $\chi\le\Delta-1$.  The hypothesis
$\Delta\ge9$ is needed, as witnessed by the following example.  Form $G$ from
five disjoint copies of $K_3$, say $D_1,\ldots,D_5$, by adding edges between $u$
and $v$ if $u\in D_i$, $v\in D_j$, and $i-j\equiv1\bmod 5$.  This graph is
8-regular with $\omega=6$ and $\chi\ge\ceil{15/2}=8$, since each color is used
on at most 2 of the 15 vertices; by Brooks' Theorem $G$ is 8-colorable, so
$\chi(G)=8$.  Various other examples with $\chi=\Delta$ and
$\omega<\Delta$ are known for $\Delta\le 8$ (see for example~\cite{CR2}).
The Borodin-Kostochka Conjecture has been proved for various families of graphs.
Reed~\cite{Reed} used probabilistic arguments to prove it for graphs with
$\Delta\ge 10^{14}$.  The present authors~\cite{CR2} proved it for claw-free
graphs (those with no induced $K_{1,3}$).

The contrapositive of the conjecture states that if $\chi\ge \Delta\ge 9$, then $\omega\ge\Delta$.  
The first result in this direction was
due to Borodin and Kostochka~\cite{BK}, who proved that
$\omega\ge\floor{\frac{\Delta+1}{2}}$ when $\chi \ge \Delta$.  Subsequently, Mozhan~\cite{Moz1} improved this to
$\omega\ge\floor{\frac{2\Delta+1}{3}}$ when $\Delta \ge 10$ and Kostochka~\cite{Kostochka} showed that $\chi \ge \Delta$ implies
that $\omega\ge\Delta-28$.  
Finally, Mozhan proved that $\omega\ge\Delta-3$ when $\chi\ge \Delta\ge 31$
(this result was in his Ph.D. thesis, which unfortunately is
not readily accessible~\cite{Reed}).  Theorem~\ref{main1} strengthens Mozhan's
result, by weakening the condition to $\Delta\ge 13$.  
Work in the direction of Theorem~\ref{main2} began in~\cite{kierstead2009ore},
where Kierstead and Kostochka proved that if $\chi\ge\Delta\ge 7$ and
$\omega\le\Delta-1$, then $\omega(\H)\ge 2$.  This was strengthened
in~\cite{KRS} to the conclusion $\omega(\H)\ge\floor{\frac{\Delta-1}2}$.  We
further strengthen the conclusion to $\omega(\H)\ge \Delta-5$. 
We give more background
in the introduction to Section~\ref{sectionMain1}.

Most of our notation is standard, as in~\cite{IGT}.
We write $K_t$ and $E_t$ to denote the complete and edgeless graphs on $t$
vertices, respectively.  A subset of vertices $S$ is a \emph{clique} if
$S$ induces a complete graph.  We write $[n]$ to denote $\{1,\ldots,n\}$. 
The \textit{join} of disjoint graphs $G$ and $H$,
denoted $\join{G}{H}$, is formed from the disjoint union of $G$ and $H$ by
adding all edges with one endpoint in each of $G$ and $H$.  
Two sets of vertices $R$ and $S$ in a graph $G$ are \emph{joined} if for every
pair of distinct vertices $r, s$ with $r\in R$ and $s\in S$, the graph $G$
contains the edge $rs$. (Note that $R$ and $S$ need not be disjoint.) Subgraphs $A$ and $B$ of $G$ are \emph{joined} if $V(A)$ and $V(B)$ are joined.
If $R$ and $S$ are joined to each other, we may also say that $R$ is
\emph{complete} to $S$.

For a vertex $v$ and a set $S$ (containing
$v$ or not) we write $d_S(v)$ to denote $|S\cap N(v)|$.  When vertices $x$ and
$y$ are adjacent, we write $x\adj y$; otherwise $x\nonadj y$.  If $\mathcal{Z}$ is a set of graphs, we let $V(\mathcal{Z}) = \bigcup_{G \in \mathcal{Z}} V(G)$.
A graph $G$ is \emph{$k$-critical} if $\chi(G)=k$ and $\chi(H)<k$ for every
proper induced subgraph $H$.  (When we say simply that a graph $G$ is
\emph{critical}, we mean that is $\chi(G)$-critical.)  A vertex $v$ in a graph
$G$ is \emph{critical} if $\chi(G\setminus\{v\})<\chi(G)$.  Note that in a
$\Delta$-critical graph, every vertex has degree $\Delta$ or $\Delta-1$.  
A vertex $v$ is \textit{high} if $d(v)=\Delta$ and \textit{low} otherwise.  

\section{Mozhan's Partitioned Colorings}
\label{mozhan-partitions}

In \cite{Moz1}, Mozhan used a partition of a graph into groups of color classes
to prove bounds on the chromatic number in terms of the degree and clique
number.  These ideas trace all the way back to the 1966 paper of Lov{\'a}sz
\cite{lovasz1966decomposition} where he proves that if $G$ is a graph and $r_1,
\ldots, r_k \in \IN$ with $\sum_{i \in \irange{k}} r_i \ge \Delta(G) + 1 - k$,
then $V(G)$ has a partition $\set{V_1, \ldots, V_k}$ where $\Delta(G[V_i]) \le
r_i$ for all $i \in \irange{k}$.  The proof idea is simple; just take a
partition minimizing the number of edges within parts (with an appropriate
weighting depending on $r_i$).  In \cite{CatlinAnotherBound}, Catlin took this
idea further by starting with such a minimum partition and then moving
vertices
around (while preserving minimality) until he achieved a desired property.  To
get the ability to move vertices around like this, he needed to strengthen the
condition on the $r_i$ to $\sum_{i \in \irange{k}} r_i \ge \Delta(G) + 2 - k$. 

Mozhan's idea is very similar to Catlin's, but not equivalent.  As we will see
below, Mozhan considers partitions of $V(G)$ minimizing the number of edges
within parts, just
like Lov{\'a}sz and Catlin, but he adds the restriction that each part is the
disjoint union of color classes in some fixed $\chi(G)$-coloring of $G$.  With
this added restriction we get a weaker bound on the degrees within parts, but
more information about the coloring.  Because of this trade-off Mozhan's method
excels when all we care about is coloring the parts, but if we require the
parts to have more structure (for example, for them to be degenerate as in
Borodin's result \cite{borodin1976decomposition}), we need to use Catlin's
method or some other technique (see \cite{borodin2000variable} for example). 
In some cases either technique will work; Mozhan's method was used
in~\cite{Rabern2} and~\cite{KRS}, but the same results were derived in
\cite{Rabern3} using Catlin's method.  The results in this paper require the
use of Mozhan's more restrictive partitions, which we define now.

In our proofs of Theorems~\ref{main1}, \ref{main2}, and \ref{firstthm}, we assume that
$G$ is critical, so we only need the partition in the following definition when $G$ is
critical.  However, we include non-critical graphs as well because the more general
concept is needed to extract an efficient algorithm from our proofs.  We discuss
algorithmic considerations in the final section of the paper.  
Since the proof of Theorem~\ref{firstthm} is long, we provide a proof sketch as
soon as we have the necessary definitions.  This immediately follows
Definition~\ref{full}.

\begin{defn}
\label{Mpartition}
For $s \in \IN_{\ge 2}$ and $r_1, \ldots, r_s \in \IN_{\ge 3}$, an \emph{$(r_1,
\ldots, r_s)$-partition} $P$ of a graph $G$ is a partition $(P_1,\ldots,P_s)$ of
$V(G)$, together with an integer $j\in [s]$ and vertex $v\in P_j$, such that
\begin{enumerate}
\item[(1)] 
$\chi(G[P_i])=r_i$ for all $i\in [s]\setminus\{j\}$; and
\item[(2)] 
$\chi(G[P_j] \setminus \{v\}) \le r_j$.  
\end{enumerate}

\noindent We refer to $j$ and $v$ by $j(P)$ and $v(P)$ respectively.
\end{defn}

For example, if $G$ is critical and $\Delta(G)=13$, then we get a
$(3,3,3,3)$-partition of
$G$ by removing any $v \in V(G)$, partitioning the color classes of a
$12$-coloring of $G-v$ into four equal parts and then adding $v$ to one part,
called part $j$. 

We are interested in $(r_1, \ldots, r_s)$-partitions that minimize the total
number of edges within parts (without $v(P)$).  
More precisely, for an $(r_1, \ldots, r_s)$-partition $P$ of a graph $G$, let
$\sigma(P) = \size{G[P_{j(P)}] \setminus \{v(P)\}} + \sum_{i\in
[s]\setminus\{j(P)\}} \size{G[P_i]}$; here $\size{H}$ denotes the number of
edges in subgraph $H$. A \emph{minimum $(r_1, \ldots, r_s)$-partition} of $G$
is an $(r_1, \ldots, r_s)$-partition $P$ minimizing $\sigma(P)$ and, subject to
that, minimizing $d_{j(P)}(v(P))-r_{j(P)}$.

\begin{lemma}
\label{GeneralMozhan}
If $P$ is a minimum $(r_1, \ldots, r_s)$-partition of a graph $G$ with $\chi(G)
= \Delta(G) = 1 + \sum_{i \in \irange{s}} r_i$, then
\begin{enumerate}
\item[(1)] $G[P_{j(P)}]$ has a component $\A(P)$, called the \textit{active}
component, that is $K_{r_{j(P)} + 1}$ and $\chi(G[P_{j(P)}]\setminus
V(\A(P)))\le r_{j(P)}$; and
\item[(2)] for each $u \in V(\A(P))$ and $i \in[s]\setminus \{j(P)\}$ with
$d_{P_i}(u) = r_i$, the graph $G[P_i \cup \{u\}]$ has a $K_{r_i + 1}$
component (which contains $u$); and
\item[(3)] for each $u\in V(\A(P))$ and $i \in [s] \setminus\{j(P)\}$, if $u$
has at least $d_{P_i}(u) + 1 - r_i$ neighbors in the same component $D$ of
$G[P_i]$, then $\chi(G[V(D) \cup\{u\}]) = r_i + 1$; and
\item[(4)] if $u \in V(G)$ and $a \in \irange{s}$ so that $d_{P_a}(u) > r_a +
1$, then there is $i \in \irange{s}$ where $d_{P_i}(u) < r_i$.  
In particular, any $r_i$-coloring of $G[P_i]$ can be extended to an $r_i$
coloring of $G[P_i \cup \set{u}]$; and
\item[(5)] for each $u\in V(\A(P))$ and $i \in [s] \setminus\{j(P)\}$, we have
$d_{P_i}(u) \le r_i + 1$.
\end{enumerate}
\end{lemma}
\begin{proof}
Let $P$ be a minimum $(r_1, \ldots, r_s)$-partition of a graph $G$ with
$\chi(G) = \Delta(G) = 1 + \sum_{i \in \irange{s}} r_i$.
Let $j = j(P)$ and $v = v(P)$.
Let $\A(P)$ be the component of $G[P_j]$ containing $v$.  
Fix a $(\Delta(G)-1)$-coloring of $G-v$ consistent with the partition.
Since
$\sum_{i\in[k]}r_i=\Delta(G)-1$, and since $d_{j(P)}(v)-r_{j}$ is minimized
in the choice of the partition, we must have $d_{j}(v)\le r_j$.  Equality must
hold, since otherwise we could extend the $(\Delta(G)-1)$-coloring of $G-v$ to $v$.

By construction, $G[P_j\setminus\{v\}]$ has an $r_j$-coloring.
So we may assume that $\chi(\A(P))=r_j+1$, since otherwise we get an
$r_j$-coloring of $G[P_j]$, and hence a $(\Delta-1)$-coloring of $G$.

To prove (1), it suffices to show that $\A(P)$ is $K_{r_j + 1}$.  By Brooks'
Theorem, it is enough to show that $\Delta(\A(P)) \le r_j$.  Suppose instead
that there exists $u\in V(\A(P))$ with $d_{\A(P)}(u) > r_j$; choose $u$ to
minimize the distance in $\A(P)$ from $u$ to $v$.  Uncolor the vertices on a
shortest path $Q$ in $\A(P)$ from $u$ to $v$; move $u$ to some $P_k$ where it
has at most $r_k$ neighbors.  Color the vertices of $Q$,
starting at $v$ and working along $Q$; this is possible since each vertex of
$Q$ has at most $r_j - 1$ colored neighbors in $\A(P)$ when we color it.  
The resulting new partition $R$ (with $v(R)=u$) has fewer edges within color
classes, since we lost at least $r_j + 1$ edges incident to $u$ and gained at
most $r_j$ incident to $v$. This contradiction implies that $\Delta(\A(P))\le
r_j$, so $\A(P)$ must be $K_{r_j + 1}$ by Brooks' Theorem.  Thus (1) holds.  

Now we prove (2).  Choose such a vertex $u\in V(\A(P))$ and such an
$i\in[s]\setminus\{j\}$.  Form a new partition $R$ by deleting $u$ from $P_j$
and adding it to $P_i$ (now $u=v(R)$); this maintains the total number of edges
within parts, so $R$ is another minimum $(r_1, \ldots, r_s)$-partition.  
By the above proof of (1), $u$ lies in a component of $G[P_i]$
that is $K_{r_j + 1}$.  Thus, (2) holds.  

If (3) is false, then $u$ has at most $r_i - 1$
neighbors in $G[P_i] \setminus D$, so we may choose an $r_i$-coloring of
$G[P_i] \setminus D$ so that the neighbors of $u$ in $P_i \setminus V(D)$ each
get a color in $\irange{r_i - 1}$.  
Together with an $r_i$-coloring of $G[V(D)\cup\{u\}]$ where $u$ is colored
$r_i$, this gives an $r_i$-coloring of $G[V(P_i) \cup\{u\}]$.  But then we have a
$(\chi(G)-1)$-coloring of $G$, a contradiction.

(4) is immediate,  since $d_G(u) \le 1 + \sum_{i \in \irange{s}} r_i$ 

If (5) is false, then apply (4) and move $u$ to $P_i$ to get a $(\chi(G)-1)$-coloring of $G$, a contradiction.
\end{proof}

\begin{defn}
A \emph{move} is a quadruple $(P, u, i, P')$ where 
\begin{enumerate}
\item[(1)] $P$ is an $(r_1, \ldots, r_s)$-partition of a graph $G$; and
\item[(2)] $u \in V(\A(P))$; and
\item[(3)] $i \in[s] \setminus \{j(P)\}$ with $d_{P_i}(u) = r_i$; and 
\item[(4)] $P'$ is obtained from $P$ by moving $u$ from $P_{j(P)}$ to $P_i$.
\end{enumerate}
In $P'$, vertex $v(P)$ is in the part containing $V(\A(P)\setminus\{u\})$.  Also
$j(P')=i$ and $v(P')=u$.
\end{defn}

In the proof of part (2) of Lemma \ref{GeneralMozhan}, we showed that if $P$ is a minimum $(r_1, \ldots, r_s)$-partition 
and $(P, v, i, P')$ is a move, then $P'$ is a minimum $(r_1, \ldots, r_s)$-partition as well.  
Moreover, for each $k \in \irange{s}$, the number of components in $G[P_k]$ equals the number of components in $G[P'_k]$.

\begin{defn}
Let $P$ be an $(r_1, \ldots, r_s)$-partition of a graph $G$.  A \emph{move
sequence} starting at $P$ is a sequence of moves $((P^1, v_1, i_1, P^2),
\ldots, (P^q, v_q, i_q, P^{q+1}))$ where ${P^1 = P}$.
\end{defn}

\begin{defn}
Let $P$ be an $(r_1, \ldots, r_s)$-partition of a graph $G$ and 
\[\S = ((P^1,v_1, i_1, P^2),\ldots, (P^q, v_q, i_q, P^{q+1}))\] 
\noindent a move sequence starting at
$P$. For each $i \in \irange{s}$ and component $X$ of $G[P_i]$, let the
$\emph{club}$ of $X$, written $\C_{\S}(X)$, be the sequence $(X_1, X_2, X_3,
\ldots, X_{q+1})$ where $X_1 = X$ and for $t \in \irange{q} \setminus \set{1}$
\begin{itemize}
\item $X_t = X_{t-1} \setminus \{v_{t-1}\}$ if $X_{t-1}$ is the active
component in $P^{t-1}$; otherwise 
\item $X_t = X_{t-1} \cup \{v_{t-1}\}$ if $G[V(X_{t-1}) \cup \{v_{t-1}\}]$ is
the active component in $P^t$; otherwise
\item $X_t = X_{t-1}$.
\end{itemize}
\end{defn}

We need to extend the domain of $\C_\S$ to all components at all times in a
given sequence. To do this consistently, we will let $\C_\S(Y)$ be the club
that $Y$ appears in most recently.  Now we give a precise definition of this extension.
Let $P$ be an $(r_1, \ldots, r_s)$-partition of a graph $G$ and 
\[\S = ((P^1,v_1, i_1, P^2),\ldots, (P^q, v_q, i_q, P^{q+1}))\] 
\noindent a move sequence starting at $P$.  For each $t \in \irange{q+1}$ and $Y$ a
component of $G[P_i^t]$ for some $i \in \irange{s}$, we define $\C^t_\S(Y)$ to be
$\C_{\S}(X) = (X_1, X_2, X_3,\ldots, X_{q+1})$ where $X$ is the component of
$G[P_i^1]$ such that $V(Y) = V(X_t)$.  Often, the time $t$ will be
clear from context, so we can write simply $\C_\S(X)$.

When the move sequence is clear from context, we write $\C(X)$ in place of
$\C_{\S}(X)$. We say \emph{$R$ is a club of $\S$} if $R = \C_\S(X)$ for a component $X$ of $G[P_i]$ for some $i \in \irange{s}$. For a club $R$, we write $R_t$ for the $t$-th element of $R$.  

We observe a few basic facts about clubs; we omit formal proofs by induction,
which are easy exercises.

\begin{observation}\label{BasicClubFacts1}
Let $P$ be a minimum $(r_1, \ldots, r_s)$-partition of a graph $G$ with $\chi(G) = \Delta(G) = 1 + \sum_{i \in \irange{s}} r_i$.  If
\[\S = ((P^1,v_1, i_1, P^2),\ldots, (P^q, v_q, i_q, P^{q+1}))\] is a move sequence starting at $P$, then for a club $R$ of $\S$, we have
\begin{enumerate}
\item[(1)] if $V(R_1) \subseteq P^1_i$, then $V(R_t) \subseteq P^t_i$ for all $t \in \irange{q+1}$.  We call this $i$ the \emph{part} of $R$, written $\rho_\S(R)$ (or $\rho(R)$ when context allows).
\item[(2)] if $a,b \in \irange{q+1}$, then $R_a$ is complete if and only if $R_b$ is complete.
\item[(3)] if $R_t$ is complete and $\card{R_t} \ge r_{\rho(R)}$ for all $t \in
\irange{q+1}$,
then $\card{R_a} = r_{\rho(R)} + 1$ when $R_a$ is active and otherwise $\card{R_a} = r_{\rho(R)}$.
\end{enumerate}
\end{observation}
The notion introduced in (3) of the previous observation is important, so, in the
following definition, we name it.

\begin{defn}
\label{full}
Let $P$ be a minimum $(r_1, \ldots, r_s)$-partition of a graph $G$ with $\chi(G) = \Delta(G) = 1 + \sum_{i \in \irange{s}} r_i$.  Let
\[\S = ((P^1,v_1, i_1, P^2),\ldots, (P^q, v_q, i_q, P^{q+1}))\] 
be a move sequence starting at $P$.  
A club $R$ of $\S$ is \emph{full} if $R_t$ is complete and $\card{R_t} \ge r_{\rho(R)}$ for all $t \in \irange{q+1}$.
\end{defn}

At this point we have enough definitions to outline the plan for proving
Theorem~\ref{firstthm}.  We start with a Mozhan partition (as in
Definition~\ref{Mpartition}) and repeatedly move a vertex from the active
component; our goal is either to find a $(\Delta-1)$-coloring or a copy of
$K_{\Delta-4}$ (before we reach a club with no unmoved vertices).  Our move
sequence will
satisfy the following criteria: each vertex moves at most once; a vertex never
moves from a club $R$ to a club $S$ if $R$ and $S$ are joined; if possible 
the active club sends a vertex to a club to which it has already sent a vertex.

Now each vertex in the active component can be sent to any of all but at most
one other clubhouse due to degree considerations. If it cannot be sent to some
additional clubhouse, this is because the active component, say $R$, is joined
to a full club, say $S$, in that clubhouse (Definition~\ref{JoinedDefn} defines
two full clubs, $R$ and $S$, being joined, but it implies that $V(R)$ is joined to $V(S)$,
which is enough for now).  The main idea is that when a maximal such move sequence
stops, it is because the active component is joined to full clubs in all but at most
one of the other clubhouses.  The final ingredient is to show that full clubs being
joined is a transitive relation; that is, if clubs $R$, $S$, and $T$ are full
and $R$ is joined to $S$ and $T$, then also $S$ is joined to $T$.  This implies that
at the end of a maximal move sequence all of the full clubs joined to the active
component are joined to each other, and thus induce a big clique (in fact, size
at least $\Delta-4$).

\begin{lemma}\label{HelperPrecursor}
Let $H$ be a graph with induced subgraphs $A_1, \ldots, A_k$ where
$\{V(A_1), \ldots, V(A_k)\}$ partitions $V(H)$ and $\chi(H) = \sum_{i \in
\irange{k}} \chi(A_i)$ where $\chi(A_1) \ge 4$ and $\chi(A_i) \ge 3$ for all $i
\in \irange{k} \setminus \set{1}$.
Let $u\in V(A_1)$ be such that $\chi(A_1\setminus\{u\})< \chi(A_1)$ and let
$T_1$ be the component of $A_1$ containing $u$.  Now $\chi(T_1)=\chi(A_1)$ and
the next three statements hold.

\begin{enumerate}
\item[(a)] For each $i\in [k]\setminus \{1\}$ there is a component $T_i$ of $A_i$
such that $\chi(T_i)=\chi(A_i)$ and $d_{V(T_i)}(u)\ge \chi(A_i)$.

\item[(b)]  
Define $T_i$, for all $i\in [k]$, as above.
Suppose $d_{V(T_k)}(u) = \chi(A_k)$ and $d_{V(A_k)}(u) \le \chi(A_k) + 1$.  
Put $A^* = V(\set{A_1, \ldots, A_{k-1}})$ and $T^* = V(\set{T_1, \ldots,
T_{k-1}})$.  Further suppose there is $v \in N(u) \cap V(T_k)$ with $d_{A^*}(v)
\le 1 + \sum_{i \in \irange{k-1}} \chi(A_i)$ and $d_{T^*}(v) \ge 3$. Now there
exists $q \in \irange{k-1}$ such that $d_{V(T_q)}(v) \ge \chi(A_q)$. 

\item[(c)] Define $A^*$, $T^*$, and $v$ as in (b).  If $T^*$ induces a clique,
$T_k$ is complete, and $d_{A^*}(w) \le \card{T^*}$ for all $w \in T^*$, then
$T^* \cup \set{v}$ induces a clique.
\end{enumerate}
\end{lemma}

\begin{proof}
First we prove (a).
Pick $i \in \irange{k} \setminus \set{1}$. Since $\chi(A_1 \setminus\{u\}) <
\chi(A_1)$, we must have $\chi(A_i') = \chi(A_i) + 1$, where $A_i' = G[V(A_i)
\cup \set{u}]$.  So, $u$ has at least $\chi(A_i)$ neighbors in some component
$T_i$ of $A_i$, for otherwise we get a $\chi(A_i)$-coloring of $A_i'$ from a
$\chi(A_i)$-coloring of $A_i$ by permuting colors in components of $A_i$. This
proves (a).

Now we prove (b).  Our plan is to move $u$ to part $A_k$ and move $v$ to some
other part, and show that if (b) fails, then we have a $(\chi(G)-1)$-coloring.
Put $A_1' = G\brackets{V(A_1 \setminus\{u\}) \cup \set{v}}$ and 
$A_k' = G\brackets{V(A_k) \cup\set{u} }$ and 
$A_i' = G\brackets{V(A_i) \cup \set{v}}$ for each $i \in \irange{k-1} \setminus \set{1}$. 
Since $\chi(A_1 \setminus\{u\}) < \chi(A_1)$, we must have $\chi(A_k') =
\chi(A_k) + 1$ and $u$ is critical in $A_k'$.  Also $v$ is critical in
$A_k'$ since we can $\chi(A_k)$-color $G[A_k\setminus\set{v}]$, and extend the
coloring to $u$, since $d_{V(T_k)}(u)=\chi(A_k)$ but $v$ is removed from $T_k$
(if $u$ has one other neighbor in $A_k$, then we may possibly have to permute
the colors on that component of $G[A_k]$ to avoid the color used on $u$ in
$G[V(T_k\setminus\set{v})\cup\set{u}]$ ).
Since $v$ is critical in $A'_k$, we conclude that $d_{V(A_i)}(v) \ge \chi(A_i)$
for each $i \in \irange{k-1}$.

Since $\chi(A_k' \setminus\{v\}) < \chi(A_k')$, we must have $\chi(A_1') \ge
\chi(A_1)$ and $\chi(A_i') \ge \chi(A_i) + 1$ for each $i \in \irange{k-1} \setminus
\set{1}$.  In particular, $v$ is critical in $A_i'$ for each $i \in
\irange{k-1}$. Note that $d_{V(A_i)}(v) \le \chi(A_i) + 1$ for each $i \in
\irange{k-1}$ since $d_{A^*}(v) \le 1 + \sum_{i \in \irange{k-1}} \chi(A_i)$. 
Moreover, there is at most one $i \in \irange{k-1}$ for which $d_{V(A_i)}(v) =
\chi(A_i) + 1$.  Now the remainder of (b) consists of the following claim.

\begin{figure}
\centering
\resizebox{0.85\textwidth}{!}{\includegraphics{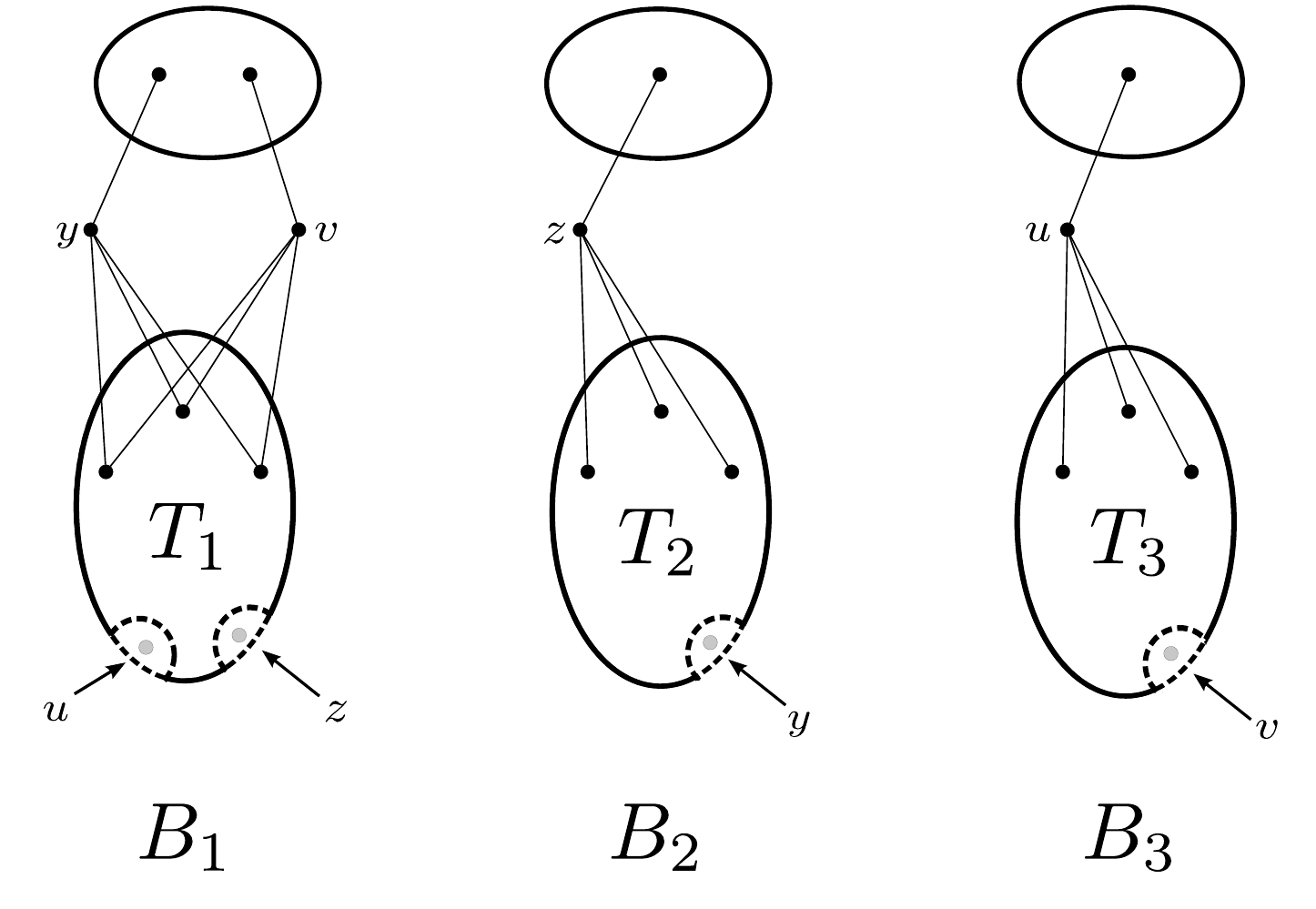}}
\captionsetup{width=0.8\textwidth}
\caption{The partition in Claim 2 of Lemma \ref{HelperPrecursor}. To form
$B_1$, $B_2$, and $B_3$ from $A_1$, $A_2$, and $A_3$ (respectively), the
vertices circled with dotted lines (and shown in gray) have now been moved to
other parts, where they are shown above the $T_i$'s.}
\label{HelperPrecursorFigure}
\end{figure}

\begin{claim}
There exists $q \in \irange{k-1}$ such that $d_{V(T_q)}(v) \ge \chi(A_q)$.
\end{claim}
Pick $w,x \in N(v) \cap T^* \setminus \set{u}$.  First, suppose there is $i \in
\irange{k-1}$ with $w,x \in V(T_i)$.  Since $v$ is critical in $A_i'$, it has
at least $\chi(A_i') - 1$ neighbors in some component $C$ of $A_i'
\setminus\{v\}$. Since $v$ has two neighbors in $T_i$, our bounds on
$d_{V(A_i)}(v)$ and $\chi(A_i')$ imply that $C = T_i$.  Since $\chi(A_i') \ge
\chi(A_i) + 1$ for each $i \in \irange{k-1} \setminus \set{1}$ (and if $i=1$,  $v$
gets $u$ as an extra neighbor), the claim is satisfied.

So, we may assume there are different $i,j \in \irange{k-1}$ with $w \in
V(T_i)$ and $x \in V(T_j)$.  Since there is at most one $p \in \irange{k-1}$
for which $d_{V(A_p)}(v) = \chi(A_p) + 1$, by symmetry we may assume that
$d_{V(A_j)}(v) = \chi(A_j)$.  Since $v$ is critical in $A_j'$, it has at least
$\chi(A_j') - 1$ neighbors in some component $C$ of $A_j' \setminus\{v\}$. 
Since $v$ has at least one neighbor in $T_j$, our bounds on $d_{V(A_j)}(v)$ and
$\chi(A_j')$ imply that $C = T_j$.  This proves the claim, and completes the
proof of (b).

Now we prove (c), which we restate as the following claim.

\begin{claim}
If $T^*$ induces a clique, $T_k$ is complete, and $d_{A^*}(w) \le \card{T^*}$ for all $w \in T^*$, then $T^* \cup \set{v}$ induces a clique.
\end{claim}
Suppose otherwise that $T^*$ induces a clique, $T_k$ is complete, and
$d_{A^*}(w) \le \card{T^*}$ for all $w \in T^*$ but $T^* \cup \set{v}$ does not
induce a clique.  By (b) we have $q \in \irange{k-1}$ such that $d_{V(T_q)}(v)
\ge \chi(A_q)$.  If $u \not \in V(A_q)$, then we could move $u$ into $A_q$
without violating any hypotheses. So, we may assume that $q = 1$.  Since $T^*
\cup \set{v}$ does not induce a clique, there is some $A_p$ to which $v$ is not
joined. 

By hypothesis $d_{V(T_1)}(v) \ge \chi(A_1)$ and $T_1$ is complete, so
$v$ must be joined to $T_1$.  So, by considering only the indices $1, p, k$, we
can assume that $k=3$ and $p = 2$. More precisely, in what follows we will move
some vertices between parts $A_1$, $A_p$, and $A_q$ and color the graph
$H[V(A_1)\cup V(A_p)\cup V(A_q)]$ with at most $\chi(A_1)+\chi(A_p)+\chi(A_k)-1$
colors.  By combining this coloring with one that uses $\chi(A_i)$ colors on
each other part $A_i$, we show that $\chi(H)<\sum_{i=1}^k\chi(A_i)$.  This
contradiction proves Claim 2.

Pick $y \in V(T_2) \setminus N(v)$ and $z \in V(T_1 \setminus \{u\})$. Let $B_1
= G\brackets{(A_1 \cup \set{v,y}) \setminus \set{u,z}}$, $B_2 =
G\brackets{\parens{A_2 \cup \set{z}} \setminus \set{y}}$,  and $B_3 =
G\brackets{\parens{A_3 \cup \set{u}} \setminus \set{v}}$.  We derive a
contradiction by showing that $\chi(B_1) < \chi(A_1)$ and $\chi(B_2) \le
\chi(A_2)$ and $\chi(B_3) \le \chi(A_3)$.  

Since, $d_{A^*}(z) \le \card{T^*}$ and $T^*$ is complete, we have
$d_{V(A_2)}(z) \le \chi(A_2) + 1$ and hence $d_{V(B_2)}(z) = d_{V(A_2)}(z) - 1
\le \chi(A_2)$ since $z \adj y$.  Since $z$ has exactly $\chi(A_2) - 1$
neighbors in $T_2 \setminus\{y\}$, we see that $z$ has at most $\chi(A_2) - 1$
neighbors in each component of $B_2 \setminus\{z\}$ and hence $\chi(B_2) \le
\chi(A_2)$.  Since, by assumption, $d_{V(A_k)}(u) \le \chi(A_k) + 1$ and $T_k$
is complete, the proof that $\chi(B_3) \le \chi(A_3)$ is nearly identical (if
$u$ has a neighbor in $A_k\setminus T_k$, then we may need to permute colors on
its component, so that this neighbor does not use the same color as $u$).

Since $\set{u, z}$ is joined to $\set{v,y}$, we have $d_{V(B_1)}(v) =
d_{V(A_1)}(v) - 2 \le \chi(A_1) + 1 - 2 = \chi(A_1) - 1$. Similarly,
$d_{V(B_1)}(y) \le \chi(A_1) - 1$.  Let $K = G\brackets{T_1 \cup \set{v,y}
\setminus \set{u,z}}$. Then $K$ is a copy of
$K_{\chi(A_1)}$ with the edge $vy$ deleted.  First, color $B_1 \setminus V(K)$
with $\chi(A_1) - 1$ colors. Since $v$ and $y$ each have at most one neighbor
outside of $K$ in $B_1$ and $\chi(A_1) \ge 4$, we can finish the coloring on
$K$ by choosing the same color for $v$ and $y$, different from the colors used
on their at most $2$ (collective) neighbors in $B_1 \setminus V(K)$, and then
coloring $K \setminus\{v, y\}$ with the $\chi(A_1) - 2$ other colors (see
Figure \ref{HelperPrecursorFigure}).
\end{proof}
\setcounter{claim}{0}

In proving our next few lemmas, we repeatedly use the following helper lemma,
which is an easy corollary of Lemma~\ref{HelperPrecursor}.

\begin{lemma}
\label{helper}
Let $P$ be a minimum $(r_1, \ldots, r_s)$-partition of a graph $G$ with
$\chi(G) = \Delta(G) = 1 + \sum_{i \in \irange{s}} r_i$.  Let \[\S = ((P^1,v_1,
i_1, P^2),\ldots, (P^q, v_q, i_q, P^{q+1}))\] be a move sequence starting at
$P$. Let $R$ and $S$ be distinct full clubs of $\S$ and $t \in \irange{q + 1}$.  If $R_t
= \A(P^t)$, then
\begin{enumerate}
\item[(a)] if $u\in V(R_t)$ and $u$ has at least 2 neighbors in $S_t$, then $u$
is joined to $S_t$.
\item[(b)] if $u\in V(R_t)$ and $v\in V(S_t)$ and $u$ has at least 2 neighbors
in $S_t$ and $v$ has at least 2 neighbors in $R_t\setminus\{u\}$, then 
$v$ is joined to $R_t$.
\end{enumerate}
\end{lemma}
\begin{proof}
First we prove (a).
By symmetry, assume that $V(R_t)\subseteq P^t_1$ and $V(S_t)\subseteq
P^t_2$.  We apply Lemma \ref{HelperPrecursor} (a) with $A_i = G[P^t_i]$ for $i
\in \irange{2}$, $H = G[V(A_1) \cup V(A_2)]$ and $T_1 = R_t$.  By Lemma
\ref{GeneralMozhan}, $\chi(H) = r_1 + r_2 + 1 = \chi(A_1) + \chi(A_2)$ and
$\chi(A_1 - x) < \chi(A_1)$ for all $x \in V(T_1)$.  Also by Lemma
\ref{GeneralMozhan},  $d_{A_2}(x) \le \chi(A_2) + 1$ for all $x \in V(T_1)$.  By Lemma \ref{HelperPrecursor}, $u$ has at least $\chi(A_2)$ neighbors in some component $T_2$ of $A_2$.  Since $d_{A_2}(u) \le \chi(A_2) + 1$ and $u$ has at least two neighbors in $S_t$, we must have $T_2 = S_t$.  Since $S_t$ is a $K_{\chi(A_2)}$ this proves (a).

Now we prove (b).
If $d_{A_1}(v) > \chi(A_1) + 1$, then there exists some part $P_k^t$ with
$d_{P_k^t}(v) < r_k$.  By moving $v$ to $P_k^t$ and any vertex in $T_1$ to
$P^t_2$, we get a $(\chi(G) - 1)$-coloring of $G$, a contradiction.  So
$d_{A_1}(v)\le \chi(A_1)+1$.  By (a), $\card{N(u) \cap V(T_2)} = \chi(A_2)$ and
$v \in N(u) \cap V(T_2)$. So, we may apply Lemma \ref{HelperPrecursor} (b) to
conclude that $\card{N(v) \cap V(T_1)} \ge \chi(A_1)$.  Since $T_1$ is a
$K_{\chi(A_1)}$ this proves (b). 
\end{proof}

\begin{lemma}\label{ClubJoinLemma}
Let $P$ be a minimum $(r_1, \ldots, r_s)$-partition of a graph $G$ with
$\chi(G) = \Delta(G) = 1 + \sum_{i \in \irange{s}} r_i$.  Let $\S$ be a move
sequence starting at $P$ and let $R$ and $S$ be distinct full clubs of $\S$. Then, for
any $t_1, t_2 \ge 1$, we have that $R_{t_1}$ is joined to $S_{t_1}$ if and only if
$R_{t_2}$ is joined to $S_{t_2}$.
\end{lemma}
\begin{proof}
Suppose the lemma is false and let 
\[\S = ((P^1, v_1, i_1, P^2), \ldots, (P^q,v_q, i_q, P^{q+1}))\] 
\noindent be the shortest move sequence for which it fails.  There
must be a $t \in \irange{q}$ such that either $R_{t}$ is not joined to $S_{t}$,
but $R_{t + 1}$ is joined to $S_{t + 1}$ or else $R_{t}$ is joined to $S_{t}$,
but $R_{t + 1}$ is not joined to $S_{t + 1}$.  
Note that $q=1$, for otherwise, the move sequence $((P^t,v_t,i_t,P^{t+1}))$ is a
shorter counterexample.
Hence $\S = ((P^1, v_1, i_1, P^2))$.  Since the reverse sequence $(P^2, v_1,
j(P^1), P^1)$ is also a counterexample, we may assume that $R_1$ is not joined
to $S_1$, but $R_2$ is joined to $S_2$.

By symmetry between $R$ and $S$, we may assume that $R_1$ is the active
component.  Since $R_1$ is not joined to $S_1$, but $R_2$ is joined to $S_2$,
it must be that $R_2 = R_1 \setminus \{v_1\}$ is joined to $S_2 = S_1$ and
there is $u \in V(S_1)$ with $v_1 \nonadj u$.  
Pick $w \in V(R_1 \setminus \{v_1\})$.
Now applying Lemma~\ref{helper}(b) to $w$ and $u$ shows that $S_1$ is joined to $R_1$, a contradiction.
\end{proof}
\setcounter{claim}{0}

Lemma \ref{ClubJoinLemma} makes it possible for us to talk about full clubs being joined or not joined as follows.
\begin{defn}
\label{JoinedDefn}
Let $P$ be a minimum $(r_1, \ldots, r_s)$-partition of a graph $G$ with
$\chi(G) = \Delta(G) = 1 + \sum_{i \in \irange{s}} r_i$.  Let $\S$ be a move
sequence starting at $P$ and let $R$ and $S$ be distinct full clubs of $\S$.  Then $R$ and $S$ are \emph{joined} if $R_t$ and $S_t$ are joined for all $t \ge 1$.  Also $R$ and $S$ are \emph{not joined} if $R_t$ and $S_t$ are not joined for all $t \ge 1$.  Note that by Lemma \ref{ClubJoinLemma} $R$ and $S$ are either joined or not joined.
\end{defn}

\begin{defn}
Let $P$ be a minimum $(r_1, \ldots, r_s)$-partition of a graph $G$.  For a club $R$ of a move sequence 
\[\S = ((P^1, v_1, i_1, P^2), \ldots, (P^q, v_2, i_2, P^{q+1}))\]
\noindent starting at $P$, we say that \emph{$R$ is active $k$ times} if the number of $t \in \irange{q+1}$ such that $R_i$ is active is $k$.
\end{defn}

\begin{lemma}\label{JoiningIsTransitive}
	Let $G$ be a graph with $\chi(G) = \Delta(G) = 1 + \sum_{i \in \irange{s}} r_i$ and let $\S$ be a move
	sequence starting at a minimum $(r_1, \ldots, r_s)$-partition of $G$. If $S$ is a full club of $\S$ that is active at least once and
 $R$ and $W$ are different full clubs of $\S$ such that $R$ is joined to $S$ and $S$ is joined to $W$, then $R$ is joined to $W$.
\end{lemma}
\begin{proof}
Pick $t$ such that $S_t$ is active and say the $t$-th move of $\S$ is $(P, v_t, i_t, P')$.  Put $T_1 = S_t$, $T_2 = R_t$, and
$T_3 = W_t$.  By symmetry, we assume that $V(T_1) \subseteq P_1$, $V(T_2)
\subseteq P_2$, and $V(T_3) \subseteq P_3$.  
We will apply Lemma \ref{HelperPrecursor} with $A_i = G[P_i]$ for all $i \in
\irange{3}$ and $H = G[V(A_1) \cup V(A_2) \cup V(A_3)]$.  
Define $A^*$ and $T^*$ as in Lemma~\ref{HelperPrecursor}.

Pick $u \in V(T_1)$.   We have $\chi(H) = r_1 + r_2 +
r_3 + 1 = \chi(A_1) + \chi(A_2) + \chi(A_3)$ and $\chi(A_1 \setminus \{u\}) <
\chi(A_1)$.  Also by Lemma \ref{GeneralMozhan},  $d_{V(A_3)}(u) \le
\chi(A_3) + 1$.  Since $T_3$ is a $K_{r_3}$, we also have $d_{V(T_3)}(u) =
\chi(A_3)$.  For any
$v \in V(T_3)$, we have $d_{A^*}(v) \le 1 + \chi(A_1) + \chi(A_2)$, for
otherwise there exists some part $P_q$ with $d_{P_q}(v) < r_q$.  By moving $v$
to $P_q$ and $u$ to $P_3$, we get a $(\chi(G) - 1)$-coloring of $G$, a
contradiction.  Also, $d_{T^*}(v) \ge 3$ since $T_1$ is joined to $T_3$. 
Additionally, $T^*$ induces a clique and $T_k$ is complete.  To apply Lemma
\ref{HelperPrecursor}, it remains to check that  $d_{A^*}(w) \le \card{T^*}$
for all $w \in T^*$.  If not, then we could move $w$ to some part $P_q$ with
$d_{P_q}(w) < r_q$ and get a  $(\chi(G) - 1)$-coloring of $G$.
So, we apply Lemma \ref{HelperPrecursor} with each $v \in V(T_3)$ and conclude that $T_3$ is joined to $T_2$ as desired.
\end{proof}
\setcounter{claim}{0}

\begin{defn}
Let $P$ be a minimum $(r_1, \ldots, r_s)$-partition of a graph $G$.  For a club
$R$ of a move sequence $\S$ starting at $P$, the \emph{spread} of $R$ is the set
of indices of parts to which $R$ sends vertices; more formally,
\[\sp_{\S}(R) = \setbs{i}{(Q, v, i, Q') \in \S \text{ with } \C(\A(Q)) = R}.\]
\noindent The \emph{spread} of $\S$ is $\sp(\S) = \max_R \card{\sp(R)}$ where
the max is over all clubs $R$ of $\S$.
\end{defn}

\begin{lemma}\label{ActiveAtMostThreeTimes}
Let $P$ be a minimum $(r_1, \ldots, r_s)$-partition of a graph $G$ with $\chi(G) = \Delta(G) = 1 + \sum_{i \in \irange{s}} r_i$.  If
\[\S = ((P^1, v_1, i_1, P^2), \ldots, (P^q, v_q, i_q, P^{q+1}))\] 
\noindent is a move sequence with $\sp(\S) \le 2$, then one of the following holds:
\begin{enumerate}
\item[(1)] $v_i = v_j$ for some distinct $i, j \in \irange{q}$ (i.e.~a vertex
moves more than once); or
\item[(2)] there is $t \in \irange{q}$ such that the active component in $P^t$ is joined to the active component in $P^{t+1}$; or
\item[(3)] every club of $\S$ is active at most $3$ times.
\end{enumerate}
\end{lemma}
\begin{proof}
Suppose the lemma is false and choose a move sequence
\[\S = ((P^1, v_1, i_1, P^2), \ldots, (P^q, v_q, i_q, P^{q+1}))\] 
\noindent for which it fails minimizing $q$.   By minimality of $q$, we have a
length three subsequence $((P^a, v_a, i_a, P^{a+1}), (P^b, v_b, i_b, P^{b+1}),
(P^{c-1}, v_{c-1}, i_{c-1}, P^c))$ of $\S$ such that 
\begin{enumerate}
\item[(i)] $\C^a(\A(P^a)) = \C^b(\A(P^b)) = \C^c(\A(P^{c}))$ and $\C^{a+1}(\A(P^{a+1}))
= \C^{b+1}(\A(P^{b+1}))$; and
\item[(ii)] there is at most one $(P^d, v_d, i_d, P^{d+1})$ in $\S$ with $a < d
< b$ such that $\C^d(\A(P^d)) = \C^a(\A(P^a))$; and
\item[(iii)] $\C^{a+1}(\A(P^{a+1}))$ is active at most $3$ times.
\end{enumerate}
	
Let $X = \C(\A(P^a))$ and $Y = \C(\A(P^{a+1}))$.  
We may choose $c$ to be the smallest index in $\{b+1,\ldots,q+1\}$ such that $X$
is active at stage $c$.
We will show that $X$ is joined to
$Y$, which gives a contradiction, since we are assuming (2) does not hold.   
If there does not exist $(P^d, v_d, i_d, P^{d+1})$ in $\S$ with $a < d < b$
such that $\C(\A(P^d)) = \C(\A(P^a))$, then let $d = b$.

\begin{center}
%
\begin{figure}[ht]
\begin{center}
\begin{tikzpicture}[scale = .8]
\tikzset{
  treenode/.style = {align=center, inner sep=0pt, text centered,
    font=\sffamily, minimum size=20pt},
  norm/.style = {treenode, circle, black, draw=black, 
    text width=1.5em},
  active/.style = {treenode, circle, black, draw=black, 
    text width=1.5em,line width=.2em},
  moved/.style = {treenode, circle, black, draw=black, 
    text width=1.5em,very thick},
}
\tikzstyle{node}= [circle, black, draw=black, text width=1.5em]
\draw[white] (0,3.75) node{};

\def\a{1.6}
\def\short{1.7in}
\def\longer{2.2in}
\draw (0,0) node[moved]{\tiny {$x_1$}};
\draw (0,1) node[norm]{\tiny {$v_b$}};
\draw (0,2) node[norm]{\tiny {$v_d$}};
\draw (0,3) node[active]{\tiny {$v_1$}};
\draw (\a,0) node[norm]{\tiny {$y_1$}};
\draw (\a,1) node[norm]{\tiny {$v_{b+1}$}};
\draw (\a,2) node[norm]{\tiny {$v_2$}};

\draw (0,-1) node{$X_1$};
\draw (\a,-1) node{$Y_1$};

\begin{scope}[xshift=\short]
\draw (0,0) node[norm]{\tiny {$x_1$}};
\draw (0,1) node[norm]{\tiny {$v_b$}};
\draw (0,2) node[norm]{\tiny {$v_d$}};
\draw (\a,0) node[norm]{\tiny {$y_1$}};
\draw (\a,1) node[norm]{\tiny {$v_{b+1}$}};
\draw (\a,2) node[active]{\tiny {$v_2$}};
\draw (\a,3) node[moved]{\tiny {$v_1$}};

\draw (0,-1) node{$X_2$};
\draw (\a,-1) node{$Y_2$};
\draw (-1.3,1.5) node{$\huge{\Rightarrow}$};

\begin{scope}[xshift=\longer]
\draw (0,0) node[norm]{\tiny {$x_1$}};
\draw (0,1) node[norm]{\tiny {$v_b$}};
\draw (0,2) node[active]{\tiny {$v_d$}};
\draw (0,3) node[moved]{\tiny {$x_2$}};
\draw (\a,0) node[norm]{\tiny {$y'_1$}};
\draw (\a,1) node[norm]{\tiny {$v_{b+1}$}};
\draw (\a,2) node[norm]{\tiny {$v_1$}};
\draw (0,-1) node{$X_d$};
\draw (\a,-1) node{$Y_d$};
\draw (-1.45,1.5) node{$\huge{\Longrightarrow}$};
\end{scope}
\end{scope}
\begin{scope}[yshift=-2.4in]

\draw (0,0) node[norm]{\tiny {$x_1$}};
\draw (0,1) node[active]{\tiny {$v_b$}};
\draw (0,2) node[norm]{\tiny {$x_2$}};
\draw (0,3) node[moved]{\tiny {$x_3$}};
\draw (\a,0) node[norm]{\tiny {$y''_1$}};
\draw (\a,1) node[norm]{\tiny {$v_{b+1}$}};
\draw (\a,2) node[norm]{\tiny {$v_1$}};
%
\draw (0,-1) node{$X_b$};
\draw (\a,-1) node{$Y_b$};
\draw (-1.45,1.5) node{$\huge{\Longrightarrow}$};

\begin{scope}[xshift=\short]
\draw (0,0) node[norm]{\tiny {$x_1$}};
\draw (0,1) node[norm]{\tiny {$x_2$}};
\draw (0,2) node[norm]{\tiny {$x_3$}};
\draw (\a,0) node[norm]{\tiny {$y''_1$}};
\draw (\a,1) node[active]{\tiny {$v_{b+1}$}};
\draw (\a,2) node[norm]{\tiny {$v_1$}};
\draw (\a,3) node[moved]{\tiny {$v_b$}};
%
\draw (0,-1) node{$X_{b+1}$};
\draw (\a,-1) node{$Y_{b+1}$};
\draw (-1.3,1.5) node{$\huge{\Rightarrow}$};

\begin{scope}[xshift=\longer]
\draw (0,0) node[norm]{\tiny {$x_1$}};
\draw (0,1) node[norm]{\tiny {$x_2$}};
\draw (0,2) node[norm]{\tiny {$x_3$}};
\draw (0,3) node[moved]{\tiny {$x_4$}};
\draw (\a,0) node[norm]{\tiny {$y'''_1$}};
\draw (\a,1) node[norm]{\tiny {$v_1$}};
\draw (\a,2) node[norm]{\tiny {$v_b$}};
\draw (0,-1) node{$X_c$};
\draw (\a,-1) node{$Y_c$};
\draw (-1.45,1.5) node{$\huge{\Longrightarrow}$};
\draw[white] (0,-1.5) node{};
\end{scope}
\end{scope}
\end{scope}

\end{tikzpicture}
\end{center}
\captionsetup{width=0.8\textwidth}
\caption{The six key partitions $X_i,Y_i$ in the proof of
Lemma~\ref{ActiveAtMostThreeTimes}.  In each partition, the next vertex that
will move is marked in bold, and the vertex that most recently moved is marked
in semi-bold.  If a vertex is unnamed in the proof, we denote it as $x_i$ or
$y_i$ based on whether it appears in ${X_j \mbox{ or }Y_j}$.}

\end{figure}
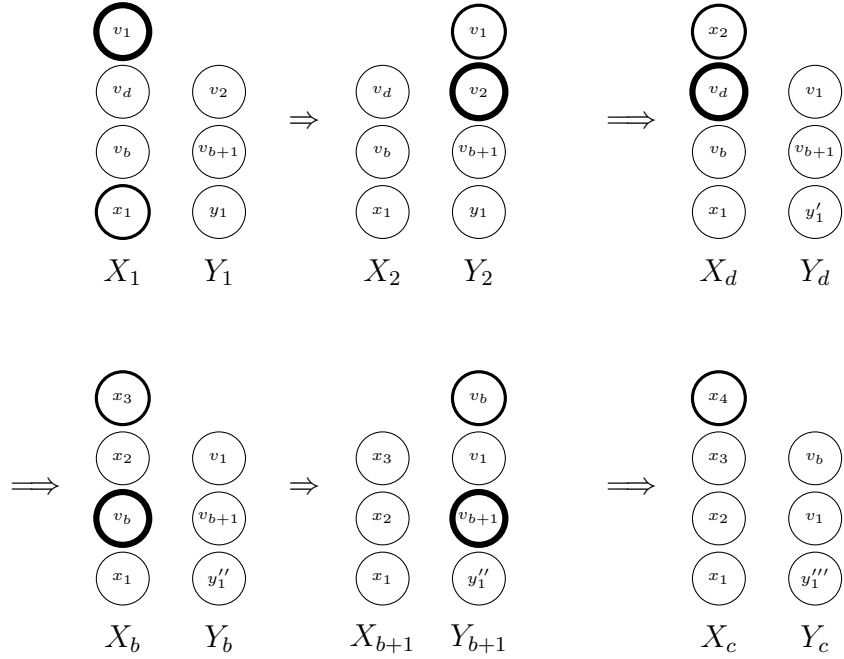

\end{center}

\begin{claim}
$\set{v_b}$ is joined to $V(Y_d)$.
\label{claim1}
\end{claim}
\noindent
Since $Y$ becomes active at most once (by (iii)) between move $d$ and move
$b+1$, we have $\card{V(Y_d) \cap V(Y_b)} \ge 2$.  One vertex in this
intersection is $v_a$, and another is $v_{b+1}$ (since no vertex is moved
twice, by (1)).  So $v_b$ is adjacent to $v_a$ and $v_{b+1}$, since
$v_a,v_b,v_{b+1}\in V(Y_{b+1})$ and $Y$ is full.  Applying Lemma~\ref{helper}(a) to $X$ and $Y$ with $t=d$, shows that
$v_b$ is joined to $V(Y_d)$, since $v_a,v_{b+1}\in V(Y_d)$.

\begin{claim}
$\set{v_a}$ is joined to $V(X_d)$.
\label{claim2}
\end{claim}

\noindent
Since $\card{V(X_d)\cap V(X_a)}\ge 3$, $v_a$ has at least 3 neighbors in $X_d$.
Now we show that $v_a$ is joined to $V(X_d)$ by 
Lemma~\ref{helper}(b).  Specifically, we apply the lemma to $X$ and $Y$ with $t=d$.
We let $u=v_b$ and $v=v_a$.  Claim 1 states that $v_b$ is joined to $Y_d$, so,
in particular, $v_b$ has at least two neighbors in $V(Y_d)$.  Since $X_a$ is
full, $v_a$ is adjacent to both $v_d$ and $x_a$ (the final vertex moved before
$\S$ began).  So Lemma~\ref{helper}(b)
implies that $\set{v_a}$ is joined to $V(X_d)$, as desired.
\begin{claim}
$\set{v_a}$ is joined to $V(X_b)$.
\label{claim3}
\end{claim}
\noindent
Since $Y$ is full, $v_b$ is joined to $V(Y_b)$.  Since $\card{V(X_d)\cap
V(X_b)}\ge 3$ and $v_a$ is joined to $V(X_d)$, $v_a$ has at least 3 neighbors
in $X_b$.  So $v_a$ is joined to $V(X_b)$ by Lemma~\ref{helper}(b) applied to $X$ and $Y$ with $t=b$.

\begin{claim}
$V(X_{b+1})$ is joined to $V(Y_c)$.
\label{claim4}
\end{claim}
\noindent
Since $V(X_{b+1})\subset V(X_b)$, Claim 3 shows that $\set{v_a, v_b}$ is joined to $V(X_{b+1})$.  But, $\set{v_a, v_b} \subset V(Y_c)$, so applying Lemma~\ref{helper}(a) to $X$ and $Y$ 
with $t=c$ shows that $V(X_{b+1})$ is joined to $V(Y_c)$.

\begin{claim}
$V(X_c)$ is joined to $V(Y_c)$. In particular, $X$ is joined to $Y$.
\label{claim5}
\end{claim}

\noindent
Since $\card{X_{b+1}} \ge 3$, Claim~\ref{claim4} and an application of and Lemma~\ref{helper}(b) to $X$ and $Y$ 
with $t=c$ shows that $V(X_{c})$ is joined to $V(Y_c)$.
\end{proof}
\setcounter{claim}{0}

\setcounter{lemma}{2}
\begin{thm}
If $G$ is a graph with $\chi(G) \ge \Delta(G)$, then $\omega(G) \ge
\Delta(G) - 3$ if $\Delta(G) \equiv 1\pmod 3$ and $\omega(G) \ge \Delta(G) -
4$ otherwise.
\label{firstthm}
\end{thm}
\setcounter{lemma}{9}
\begin{proof}
The theorem is trivially true if $\Delta(G)\le 6$, so we assume that
$\Delta(G)\ge 7$.
It suffices to consider critical graphs, since any graph $G$ contains a
critical subgraph $H$ with $\chi(H)=\chi(G)$.
By Brooks' Theorem, we may assume $\chi(G) = \Delta(G)$.  Let $s =
\Floor{\frac{\Delta(G) - 1}{3}}$ and $r_1, \ldots, r_s \in \set{3,4}$ such that
$\Delta(G) = 1 + \sum_{i \in \irange{s}} r_i$.  Now $G$ has an $(r_1, \ldots,
r_s)$-partition, so we can let $P$ be a minimum $(r_1, \ldots, r_s)$-partition
of $G$.  Let 
\[\S = ((P^1, v_1, i_1, P^2), \ldots, (P^q, v_q, i_q, P^{q+1}))\] 
\noindent be a move sequence starting at $P$ with $\sp(\S) \le 2$ of maximum
length such that $v_i \ne v_j$ for all pairs of distinct $i, j \in
\irange{q}$ and for each $t \in \irange{q}$ the active component in $P^t$ is
not joined to the active component in $P^{t+1}$.  Let $A = \A(P^{q+1})$.  
Lemma \ref{ActiveAtMostThreeTimes} implies that $\C^{q+1}(A)$ is active at most $3$
times in $\S$.  Since $r_{i_q} \ge 3$, there is $x \in V(A)$ such that $x \not
\in \setbs{v_t}{t \in \irange{q}}$, i.e., $x$ has never moved during $\S$.  

Let $T = \sp(\C(A))$.  If there is $i \in T$ with $d_{P^{q+1}_i}(x) = r_i$,
then we have a move $(P^{q+1}, x, i, Q^{i})$ and by the maximality condition on
$\S$, it must be that $A$ is joined to $\A(Q^{i})$.  But, by assumption, $A$ is
not joined to $\A(Q^{i})$ for any $i \in T$, so this is impossible.  

Since $d_G(x) \le 1 + \sum_{i \in \irange{s}} r_i$ and $x$ has exactly
$r_{i_q}$ neighbors in $P^{q+1}_{i_q}$, there is at most one $i \in \irange{s}
\setminus \set{i_q}$ for which $d_{P^{q+1}_i}(x) > r_i$.  So, $\card{T} \le 1$
and if $\card{T} = 1$, then $T$ contains the one $i$ with $d_{P^{q+1}_i}(x) >
r_i$.  By the maximality condition on $\S$, it must be that $A$ is joined to
clubs in $P^{q+1}_i$ for all but one $i \in \irange{s} \setminus \set{i_q}$. 
By Lemma~\ref{GeneralMozhan}(2), we know that each club joined to $A$ is full.
By Lemma \ref{JoiningIsTransitive}, all of these full clubs must be pairwise
joined to each other.  Thus, together, they induce a large clique.
Specifically, they induce a clique of size $1+\sum_{j\in
\irange{s}\setminus\set{i}}r_j$, which is size $\Delta(G)-r_{i}$.
Since $r_j = 3$ if $\Delta(G) \equiv 1\pmod 3$ and $r_j \le 4$
otherwise, we have the desired large clique.
\end{proof}

As an immediate consequence of
Theorem~\ref{firstthm}, we get the following corollary.

\begin{cor}
\label{base13}
If $G$ is a graph with $\chi(G)\ge \Delta(G)= 13$, then $G$ contains $K_{10}$.
\end{cor}

\section{The First Main Theorem}
\label{sectionMain1}

A \emph{hitting set} is an independent set that intersects every maximum
clique.  If $I$ is a hitting set and also a maximal independent set, then
$\Delta(G-I)\le \Delta(G)-1$ and $\chi(G-I)\ge \chi(G)-1$.  (In our
applications, we can typically assume that $\Delta(G-I)=\Delta(G)-1$, since
otherwise we get a good coloring or a big clique from Brooks' Theorem.  We
give more details in the proof of Theorem~\ref{main1}.) So if 
$G-I$ has a clique of size $\Delta(G-I)-t$, for some constant $t$, then
also $G$ has a clique of size $\Delta(G)-t$.
We repeatedly remove hitting sets to reduce a graph with $\Delta\ge 13$
to one with $\Delta=13$.  Since we proved in Corollary~\ref{base13}
that every graph with $\chi\ge\Delta=13$ contains $K_{10}$, this repeated
removal of hitting sets allows us to prove that every $G$ with
$\chi\ge\Delta\ge13$ contains $K_{\Delta-3}$.

This idea is not new.  Kostochka~\cite{Kostochka} proved
that every graph with $\omega\ge\Delta-\sqrt{\Delta}+\frac32$ has a hitting set.
Rabern~\cite{Rabern} extended this result to the case $\omega\ge\frac34(\Delta+1)$,
and King~\cite{King} strengthened his argument to prove 
that $G$ has a hitting set if $\omega>\frac23(\Delta+1)$.  This condition is
optimal, as illustrated by the lexicographic product of an odd cycle and a clique.
Finally, King's argument was refined by Christofides, Edwards, and
King~\cite{CEK} to show that these lexicographic products of odd cycles and
cliques are the only sharpness examples; that is, $G$ has a hitting set if
$\omega\ge\frac23(\Delta+1)$ and $G$ is not the lexicographic product of an odd
cycle and a clique.  Hitting set reductions have application to other vertex
coloring problems. 
Using this idea (and others), King and Reed~\cite{KR} gave a
short proof that there exists $\epsilon > 0$ such that
$\chi\le\ceil{(1-\epsilon)(\Delta+1)+\epsilon\omega}$. 

To keep this paper largely self-contained, we prove our own hitting set lemma. 
In the present context, it suffices to find a hitting set when $G$ is a minimal
counterexample to Theorem~\ref{main1} with $\Delta\ge14$. 
Such a $G$ is $\Delta$-critical, which facilitates a shorter proof.
In~\cite{CR1}, we proved a number of results about so called $d_1$-choosable
graphs (defined below), which are certain graphs that cannot appear as induced
subgraphs in a $\Delta$-critical graph.  We leverage these $d_1$-choosability
results to prove our hitting set lemma, then we use the hitting set lemma to
reduce the problem to the case $\Delta=13$, which we handled in
Corollary~\ref{base13}.  Since the proofs of the $d_1$-choosability results
in~\cite{CR1} are lengthy, we give a short proof of the special case that we
need here.

A \emph{list assignment} $L$ is an assignment $L(v)$ of a set of allowable
colors to each vertex $v\in V(G)$.  An $L$-coloring is a proper coloring such
that each vertex $v$ is colored from $L(v)$.  An $f$-assignment is a list
assignment $L$ such that $\card{L(v)}=f(v)$ for all $v\in V(G)$.
In particular, a $d_1$-assignment is an $f$-assignment with $f(v)=d(v)-1$ for
all $v$.
A graph $G$ is \emph{$f$-choosable} if $G$ has an $L$-coloring for every 
$f$-assignment $L$.  
No $\Delta$-critical graph contains an induced $d_1$-choosable
subgraph $H$ (by criticality, color $G\setminus H$, then extend the coloring to
$H$, since it is $d_1$-choosable).  For a list assignment $L$, let
$Pot(L)=\cup_{v\in V(G)}L(v)$.  The following lemma is central in proving each
of our $d_1$-choosability results.

\begin{lemma}[Small Pot Lemma, \cite{Kierstead,RS}]
\label{SPL}
For a list size function $f:V(G) \rightarrow \{0,\ldots,\card{G}-1\}$, a graph
$G$ is $f$-choosable if and only if $G$ is $L$-colorable for each list
assignment $L$ such that $\card{L(v)}=f(v)$ for all $v\in V(G)$ and
$\card{\cup_{v\in V(G)}L(v)} < \card{G}$.
\end{lemma}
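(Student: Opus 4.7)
The forward direction is immediate by restriction: if $G$ is $f$-choosable, it is $L$-colorable for every valid $L$, in particular for those with $|Pot(L)|<|G|$. For the converse I would argue the contrapositive: assume $G$ is not $f$-choosable and exhibit a bad list assignment $L$ with $|Pot(L)|<|G|$. The plan is to take a bad $L$ (one for which $G$ has no $L$-coloring) satisfying $|L(v)|=f(v)$ and minimizing $|Pot(L)|$, and to show $|Pot(L)|<|G|$; the small-pot hypothesis will then supply an $L$-coloring of $G$, the desired contradiction.

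Suppose instead $|Pot(L)|\ge|G|$. Writing $V_c=\{v:c\in L(v)\}$, each $V_c$ is nonempty, and
\[
\sum_{c\in Pot(L)}|V_c|\;=\;\sum_{v\in V(G)}f(v)\;\le\;|G|(|G|-1),
\]
so averaging produces a color $c\in Pot(L)$ and a vertex $u$ with $c\notin L(u)$. For each $c'\in L(u)$, let $L_{c'}$ be the list assignment obtained from $L$ by replacing $c'$ with $c$ in $L(u)$ only. Then $|L_{c'}(v)|=f(v)$ for all $v$, and $|Pot(L_{c'})|=|Pot(L)|-1$ exactly when $c'$ appears in no other list of $L$. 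Moreover any $L_{c'}$-coloring $\phi$ with $\phi(u)\ne c$ is automatically an $L$-coloring of $G$ (since the other lists are unchanged), so if $L_{c'}$ is colorable at all, every $L_{c'}$-coloring forces $c$ onto $u$.

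If some $L_{c'}$ is bad while $c'$ is unique to $L(u)$, minimality is contradicted outright. The delicate case is when every bad $L_{c'}$ has $c'$ shared with some other list and every colorable $L_{c'}$ is colorable only with $c$ on $u$. My plan here is to iterate: whenever $L_{c'}$ is bad with $c'$ shared, replace $L$ by $L_{c'}$ and rerun the averaging at a fresh pair $(c,u)$; whenever every $L_{c'}$ is colorable, restrict each $\phi_{c'}$ to $G-u$, observe that every color of $L(u)$ must appear on $N(u)$ in $\phi_{c'}$ (else $\phi_{c'}$ would extend to an $L$-coloring of $G$), and combine two such colorings $\phi_{c'_1},\phi_{c'_2}$ along their symmetric difference via a Kempe-type swap to either free a color of $L(u)$ from $N(u)$ or strictly shrink the pot.

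Finding a clean combinatorial mechanism to execute this last step — in particular verifying that the Kempe-type exchange respects the individual lists at every intermediate vertex and that the swap chain terminates before revisiting the same configuration — is the part I anticipate as the main obstacle. One secondary invariant that looks promising for forcing termination is $\sum_c|V_c|^2$, which under a swap that moves $c$ into a list containing a color $c'$ with $|V_{c'}|>|V_c|$ strictly decreases, so by always choosing such a $c'$ one should be able to make the iteration monotone and drive it to the required contradiction.
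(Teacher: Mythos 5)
Your forward direction and your initial setup (take a bad $f$-assignment and try to produce a bad one with small pot) are fine, but the core of the converse is not proved: you explicitly defer the step where two colorings $\phi_{c'_1},\phi_{c'_2}$ of $G-u$ are to be combined by a Kempe-type exchange, and this is precisely the step that does not go through. Kempe chains are unreliable in the list setting because the two colors being exchanged must lie in the list of every vertex along the chain, which nothing in your setup guarantees; and even when no single swap works, it can happen that every color of $L(u)$ is shared with other lists, so no local replacement shrinks the pot. The potential function $\sum_c|V_c|^2$ controls termination of an iteration whose individual steps are not yet shown to be legal, so the argument as written has a genuine gap rather than a routine verification left to the reader.

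The paper's proof avoids local modifications entirely and is worth internalizing. Given a bad $f$-assignment $L$ with $|L(V)|\ge|G|$, set $g(U)=|U|-|L(U)|$ for $U\subseteq V$. If $g(U)\le 0$ for all $U$, Hall's Theorem gives a system of distinct representatives, i.e.\ a rainbow (hence proper) $L$-coloring; so some $U$ has $g(U)>0$. Choose $U$ maximizing $g(U)$, fix a set $A$ of $|G|-1$ colors containing $L(U)$, and define $L'$ to agree with $L$ on $U$ while giving every other vertex an arbitrary $f(v)$-subset of $A$. Then $|L'(V)|<|G|$, so the hypothesis yields an $L'$-coloring, whose restriction to $U$ is an $L$-coloring of $U$. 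Maximality of $g(U)$ gives $|L(W)\setminus L(U)|\ge|W|$ for every $W\subseteq V\setminus U$, so a second application of Hall's Theorem extends the coloring of $U$ to an $L$-coloring of $G$ using colors outside $L(U)$ --- a contradiction. The key idea you are missing is this global one: isolate a maximally deficient set $U$ and rebuild all lists outside $U$ inside a pot of size $|G|-1$, rather than trying to repair $L$ one color at a time.
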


\begin{proof}
Fix $G$ and $f$, and let $V=V(G)$.  The ``only if'' direction is true by definition.
Now we prove the ``if'' direction.  Assume that $G$ is $L$-colorable for 
each list assignment $L$ such that $\card{L(v)}=f(v)$ for all $v$ and
$\card{\cup_{v\in V}L(v)}<\card{G}$.  For any $U \subseteq V$ and any
list assignment $L$, let $L(U)$ denote $\cup_{v\in U}L(v)$.  Let $L$ be an
$f$-assignment such that $|L(V)| \ge |G|$ and $G$ is not $L$-colorable.  For
each $U \subseteq V$, let $g(U) = |U| - |L(U)|$.  Let $\B$ be a bipartite
graph, where one part consists of vertices in $V$ and the other part
consists of colors in $Pot(L)$, and a vertex $v$ is adjacent to a color $c$ if
$c\in L(v)$.  Since $G$ is not $L$-colorable, $\B$ has no matching saturating
$V$, so Hall's Theorem implies there exists $U$ with $g(U)>0$.  Choose
$U$ to maximize $g(U)$.  Let $A$ be an arbitrary set of $|G|-1$ colors
containing $L(U)$.  Construct $L'$ as follows.  For $v\in U$, let $L'(v)=L(v)$.
 Otherwise, let $L'(v)$ be an arbitrary subset of $A$ of size $f(v)$.  Now
$|L'(V)| < |G|$, so by hypothesis, $G$ has an $L'$-coloring.  This gives an
$L$-coloring of $U$.  By the maximality of $g(U)$, for all $W \subseteq
(V\setminus U)$, we have $|L(W)\setminus L(U)| \ge |W|$.  
Let $\B'=\B\setminus (\cup_{u\in U} \{u\}\cup N_{\B}(u))$.
Thus, by Hall's Theorem, $\B'$ has a matching saturating $V\setminus U$; 
so we can extend the $L$-coloring of $U$ to all of $V$.
\end{proof}

\begin{lemma}[\cite{CR1}]
\label{K_tClassification}
For $t \geq 4$, $\join{K_t}{B}$ is not $d_1$-choosable if and only if
$\omega(B) \geq \card{B} - 1$; or $t = 4$ and $B$ is $E_3$ or $K_{1,3}$; or $t
= 5$ and $B$ is $E_3$.
\end{lemma}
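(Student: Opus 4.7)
The plan is to prove both implications of the characterization. For the reverse implication (non-$d_1$-choosability in each listed case), I would construct an explicit bad list assignment. For the forward implication ($d_1$-choosability when none of the listed conditions hold), I would apply the Small Pot Lemma (Lemma~\ref{SPL}) and then color $B$ carefully before extending to $K_t$ via Hall's theorem.

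For the reverse implication, first suppose $\omega(B) \geq \card{B} - 1$. Then $\join{K_t}{B}$ contains a clique $K$ of order at least $t + \card{B} - 1$. Assigning every $v \in V(K)$ the list $\set{1, \ldots, t + \card{B} - 2}$ (of size at least $d(v) - 1$ for each such $v$), and any compatible list to the possibly remaining vertex of $B$, yields a valid $d_1$-assignment whose pot is too small to color $K$ properly. Next suppose $(t, B) \in \set{(4, E_3), (4, K_{1,3}), (5, E_3)}$. In each case $\join{K_t}{B}$ contains a clique $K$ joined to three independent vertices $u_1, u_2, u_3$; setting $s \DefinedAs \card{V(K)} + 1$, I assign each $v \in V(K)$ the list $\set{1, \ldots, s}$ and give $u_1, u_2, u_3$ three $(s - 2)$-subsets of $\set{1, \ldots, s}$ whose complements in the pot collectively cover $\set{1, \ldots, s}$ (for instance, complements $\set{1,2}, \set{3,4}, \set{5,6}$ when $s = 6$). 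These list sizes equal $d(v) - 1$, so the assignment is valid. In any coloring of $K$, exactly one pot color is unused, and each $u_i$ requires that color; by construction the unused color is missing from some $L(u_i)$, blocking any extension.

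For the forward implication, assume $t \geq 4$, $\omega(B) \leq \card{B} - 2$, and $(t, B)$ is not one of the three exceptional pairs. Let $L$ be a $d_1$-assignment on $\join{K_t}{B}$; by Lemma~\ref{SPL} we may assume $\card{Pot(L)} \leq t + \card{B} - 1$. Each $v \in V(K_t)$ then has $\card{L(v)} = t + \card{B} - 2$ and misses at most one color of the pot. The strategy is to first produce an $L$-coloring of $B$ using at most $\card{B} - 1$ distinct colors, then extend to $V(K_t)$ by finding a system of distinct representatives for $\set{L(v) \setminus C : v \in V(K_t)}$, where $C$ is the set of colors used on $B$. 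Since $\omega(B) \leq \card{B} - 2$, $B$ is not complete and admits colorings that use colors advantageously; together with the fact that each $K_t$ list already misses one pot color, this gives enough slack to satisfy Hall's condition on the reduced $K_t$ lists.

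The main obstacle is the case analysis needed when $B$ is structurally close to an exceptional configuration, for instance when $B = E_3$ with $t \geq 6$ or when $\omega(B) = \card{B} - 2$ with $B$ otherwise near-complete. The subcases split on whether all $K_t$ lists coincide, on whether $\card{Pot(L)}$ attains its maximum $t + \card{B} - 1$ or is strictly smaller, and on the specific independence structure of $B$. The exclusion of the three exceptional pairs is exactly what provides enough room to choose an $L$-coloring of $B$ so that Hall's condition holds on the reduced $K_t$ lists; once this is done in each subcase, the resulting system of distinct representatives extends the coloring to $V(K_t)$, completing the proof.
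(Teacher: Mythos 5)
Your reverse direction is correct and in fact more complete than what the paper writes down: the explicit bad assignments for $(4,E_3)$, $(4,K_{1,3})$, and $(5,E_3)$ (uniform list $\{1,\dots,s\}$ on the clique, $(s-2)$-subsets on the three independent vertices whose complements cover the pot) work exactly as you describe, since any proper coloring of the clique leaves exactly one pot color free and that color is missing from some $u_i$.

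The forward direction, however, has a genuine gap: you state the correct strategy (Small Pot Lemma, color $B$ so as to ``save'' colors, extend to $K_t$ by Hall/greedily), but you never carry out the argument, and the step you defer is where all the content lies. Coloring $B$ with $\card{B}-1$ distinct colors is \emph{not} by itself enough: each $v\in V(K_t)$ then retains only $t-1$ usable colors, and Hall's condition can fail (e.g.\ if every $L(v)$ for $v\in V(K_t)$ contains every color used on $B$ and the residual lists coincide). One must either reuse a color twice on $B$ (using an independent set of size $3$ or two disjoint non-edges inside $B$, which is exactly what $\omega(B)\le\card{B}-2$ guarantees and which you never extract), or place on some $x\in B$ a color outside $L(y)$ for a designated $y\in V(K_t)$; and the subcase where neither is immediately possible --- all the $L(x_i)$ contained in all the $L(y_j)$ --- requires a separate counting/pigeonhole argument (the paper finds a color common to all three independent vertices, or to two disjoint non-adjacent pairs). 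Your proposal explicitly names ``the case analysis'' as the main obstacle and then describes only what the cases would split on, so the hard implication --- the only one the paper actually uses --- remains unproven. To repair it, first reduce as the paper does: extract $S\subseteq V(B)$ that is either $E_3$ or four vertices spanning two disjoint non-edges, color $G\setminus(S\cup V(K_t))$ greedily, and then do the finite analysis on $G[S\cup V(K_t)]$ with the Small Pot Lemma bounding $\card{Pot(L)}$ by $8$ (resp.\ $7$).
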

\begin{proof}
If $\omega(B)\ge |B|-1$, then assign each $v\in V(\join{K_t}{B})$ a subset of
$\{1,\ldots,t+|B|-2\}$; since $\omega(\join{K_t}{B})\ge t+|B|-1$, clearly $G$
is not colorable from this list assignment.  Now let $G=\join{K_5}{E_3}$, and
note that $\join{K_4}{K_{1,3}}\cong \join{K_5}{E_3}$.  Consider the
following list assignment $L$ for $G$: each dominating vertex has list
$\{1,\ldots,6\}$ and the three other vertices get distinct lists among
$\{1,2,3,4\}, \{1,2,5,6\}, \{3,4,5,6\}$.  If $G$ has a proper
$L$-coloring, then the dominating vertices use five distinct colors;
this leaves only one color for the three remaining vertices, but no color
appears in all three lists.  Hence, $G$ has no $L$-coloring.
Now form $G'$ from $G$  by deleting one dominating vertex (note that $G'=
\join{K_4}{E_3}$), and let $L'=L\setminus\{6\}$. Since $G$ has no $L$-coloring,
also $G'$ has no $L'$-coloring.  This proves one direction of the lemma;
now we consider the other.

Suppose the only if direction of the lemma is false, and let $G$ and $L$ be a
minimal counterexample, where $G=\join{K_t}{B}$ and $L$ is a $d_1$-assignment. 
Since $\omega(B)\le|B|-2$, subgraph $B$ contains either (i) an independent set
$S=\{x_1,x_2,x_3\}$ or  (ii) a set $S=\{x_1,x_2,x_3,x_4\}$ with
$x_1x_2,x_3x_4\notin E(B)$.  If $B$ contains only (i), then $G[S] = E_3$ and $t
\ge 6$ (by moving any dominating vertices from $B$ to $K_t$).
Let $T=V(K_t)$ and denote $T$ by $\{y_1,\ldots,y_t\}$.  
In Cases (i) and (ii) we assume by minimality that $t=6$ and $t=4$,
respectively (for larger $t$, we can greedily color all but 6 (resp. 4)
vertices; each vertex has enough colors to be colored greedily, since at least
4 of its neighbors remain uncolored).  
Also by minimality, we assume that $V(B)=S$ (as for larger $t$, we can greedily
color vertices of $B$ not in $S$).

By definition $\card{L(v)}= d(v)-1$; specifically,
$|L(x_i)|= d_S(x_i)+t-1$ and $|L(y_j)|= |S|+t-2$ for all $x_i\in S$ and
$y_j\in T$.  
When we have $i, j, k$ with $x_i\nonadj x_j$ and $|L(x_i)|+|L(x_j)|>|L(y_k)|$,
we often use the following technique, called \emph{saving a color} on $y_k$ via
$x_i$ and $x_j$.  
If there exists $c\in L(x_i)\cap L(x_j)$, then use $c$ on $x_i$ and $x_j$. 
Otherwise, color just one of $x_i$ and $x_j$ with some $c\in (L(x_i)\cup
L(x_j))\setminus L(y_k)$.  For a set $U$, let $L(U)=\cup_{v\in U}L(v)$.

Case (i)  
By the Small Pot Lemma, assume that $|L(G)|\le 8$.  This implies $|L(x_i)\cap
L(x_j)|\ge 2$ for all $i,j\in[3]$.  If there exist $x_i$ and $y_k$
with $L(x_i)\not\subseteq L(y_k)$, then color $x_i$ to save a color on
$y_k$.  Color the remaining $x$'s with a common color; this saves an
additional color on each $y$.  Now finish greedily, ending with $y_k$.
Thus, we have $L(x_i)\subset L(y_k)$ for all $i\in [3]$ and $k\in [6]$.  This
gives $\card{\cup_{i=1}^3L(x_i)}\le 7$.  Since
$\sum_{i=1}^3|L(x_i)|=15>2|\cup_{k=1}^3L(x_k)|$, we have a color $c\in
\cap_{i=1}^3L(x_i)$.  Use $c$ on every $x_i$ and finish greedily.

Case (ii)
By the Small Pot Lemma, assume that $|L(G)|\le 7$.  If $S$ induces at least two edges, then
$|L(x_1)|+|L(x_2)|\ge 8$.
So $L(x_1)\cap L(x_2)\ne \emptyset$.
Color $x_1$ and $x_2$ with a
common color $c$.  If $|L(y_1)\setminus\{c\}|\le 5$, then save a color on $y_1$
via $x_3$ and $x_4$.  Now finish greedily, ending with $y_1$.

Suppose $S$ induces exactly one edge; by symmetry, say it is $x_1x_3$.  
Suppose that 
$L(x_1)\cap L(x_2)\ne\emptyset$. 
Similar to the previous argument, use a common color on $x_1$ and $x_2$,
possibly save on $y_1$ via $x_3$ and $x_4$, then finish greedily.  
So instead, assume that $L(x_1)\cap L(x_2)=\emptyset$. 
Since $\card{L(G)}\le 7$ and $L(x_1)\cap L(x_2)=\emptyset$,
by symmetry (between $x_1$ and $x_3$ and also between $x_2$ and $x_4$), we may
assume that $L(x_1)=L(x_3)=\{a,b,c,d\}$ and $L(x_2)=L(x_4)=\{e,f,g\}$.  Also by
symmetry, $a$ or $e$ is missing from $L(y_1)$.  So color $x_1$ with $a$ and
$x_2$ and $x_4$ with $e$ and $x_3$ arbitrarily; this saves one color on each
$y_i$ and a second color on $y_1$.  Now finish greedily, ending with $y_1$.  

So instead $G[S]=E_4$. 
If a common color appears on 3 vertices of $S$, use it there, then finish
greedily.  If not, then by pigeonhole, at least 5 colors appear on pairs of
vertices; so, two colors appear on disjoint pairs.  Color two such disjoint
pairs, each with a common color.  Now finish the coloring greedily.
\end{proof}

The following lemma of King enables us to find an independent transversal.

\begin{lemma}[Lopsided Transversal Lemma~\cite{King}]\label{Lopsided}
Let $H$ be a graph and $V_1 \cup \cdots \cup V_r$ a partition of $V(H)$.  
If there exists $s \geq 1$ such that for each $i \in \irange{r}$ and each
$v \in V_i$ we have $d(v) \leq \min\set{s, \card{V_i}-s}$, then $H$ has an
independent transversal $I$ of $V_1, \ldots, V_r$.
\end{lemma}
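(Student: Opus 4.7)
The plan is to argue by contradiction using an extremal partial independent transversal. Let $I \subseteq V(H)$ be an independent set with $|I \cap V_i| \leq 1$ for every $i$; choose $I$ with $|I|$ maximum, and subject to that, with $\sigma(I) = \sum_{u \in I} d_H(u)$ minimum. Let $J = \{i : I \cap V_i \neq \emptyset\}$ and assume, for contradiction, that $J \neq [r]$; fix $i^* \in [r] \setminus J$. By the maximality of $|I|$, every $v \in V_{i^*}$ has at least one neighbor in $I$ (else $I \cup \{v\}$ would be strictly larger).

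I split based on whether some $v \in V_{i^*}$ has a \emph{unique} neighbor in $I$. If such a $v$ exists with unique neighbor $u \in V_j$, perform the swap $I' = (I \setminus \{u\}) \cup \{v\}$: this is still an independent set meeting each $V_i$ at most once, of size $|I|$, now covering $(J \setminus \{j\}) \cup \{i^*\}$ and leaving $V_j$ uncovered. Repeat the analysis with $V_j$ in place of $V_{i^*}$, producing a chain of swaps. The bound $d(v) \leq |V_i| - s$ guarantees enough ``slack'' within each part so that newly uncovered parts still contain candidates whose unique-neighbor structure is exploitable, while $d(v) \leq s$ limits the number of other transversal vertices that any single swap disturbs. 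Together these force the swap chain to terminate either at a strictly larger partial transversal (contradicting the maximality of $|I|$) or at a configuration with strictly smaller $\sigma$ (contradicting the secondary minimality).

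If instead no vertex of $V_{i^*}$ has a unique neighbor in $I$, then every $v \in V_{i^*}$ has at least two neighbors in $I$, so the number of edges between $V_{i^*}$ and $I$ is at least $2|V_{i^*}|$. Counting from the $I$-side, this quantity is at most $\sum_{u \in I} d_H(u) \leq s|I| \leq s(r-1)$, yielding $|V_{i^*}| \leq s(r-1)/2$. Combining with $|V_{i^*}| \geq d(v) + s$ for every $v \in V_{i^*}$ and a short counting argument over the bipartite edges between $V_{i^*}$ and $I$ (using that each $u \in I$ can block at most $s$ distinct vertices of $V_{i^*}$) contradicts the assumption that every $v \in V_{i^*}$ is multiply blocked.

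The main obstacle is formalizing the iterated swap argument: a single swap rarely suffices, and the chain of swaps can in principle revisit parts, so the correct bookkeeping requires a monotone potential. The lexicographic order on $(|I|, \sigma(I))$ is the natural candidate. The crux is verifying that each intermediate swap strictly improves this potential (or preserves $|I|$ while strictly reducing $\sigma$), and that the chain must eventually produce a genuine size-increasing extension. This is precisely where both halves of the lopsided inequality $d(v) \leq \min\{s, |V_i| - s\}$ are invoked in tandem: the first half controls the ``spread'' of each swap, while the second ensures enough fresh vertices remain in each part throughout the chain.
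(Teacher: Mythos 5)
The paper does not prove Lemma~\ref{Lopsided} at all; it is quoted from King~\cite{King}, whose proof is an adaptation of Haxell's argument for independent transversals. So the only question is whether your sketch stands on its own, and it does not: it has a genuine gap exactly where you yourself flag ``the main obstacle.''

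The central problem is that your proposed potential does not work as described. A swap $I' = (I\setminus\{u\})\cup\{v\}$ preserves $|I|$, and there is no reason it should decrease $\sigma(I)=\sum_{u\in I}d(u)$ --- that would require $d(v)<d(u)$, which nothing in the hypotheses supplies. So the lexicographic order on $(|I|,\sigma(I))$ is not a monotone quantity along your swap chain, the chain may revisit earlier configurations, and neither termination nor a contradiction follows. Your second case is likewise only an announcement: ``a short counting argument \ldots contradicts the assumption'' is precisely the step that needs to be exhibited, and the inequality $|V_{i^*}|\le s(r-1)/2$ you derive is not in tension with anything (the parts can easily be that small). In short, both halves of the argument defer the actual mechanism. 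The known proofs (Haxell's, and King's lopsided refinement) do not run a local swap chain with a per-step potential; instead, assuming no independent transversal exists, they grow a set $J$ of parts together with a dominating set $D$ of vertices lying in $\bigcup_{j\in J}V_j$, and derive a contradiction from a single global edge count between $D$ and the parts of $J$: the bound $d(v)\le s$ caps the number of edges leaving $D$, while $d(v)\le |V_i|-s$ guarantees each part in $J$ retains at least $s$ undominated vertices, and these two estimates are incompatible once $|J|$ is large enough. That global bookkeeping is the missing idea; without it, your outline is a plausible plan rather than a proof.
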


Now we have all the tools to prove our hitting set lemma.

\begin{lemma}
\label{ourHittingLemma}
Every $\Delta$-critical graph with $\chi \geq \Delta \geq 14$ and
$\omega=\Delta-4$ has a hitting set.
\end{lemma}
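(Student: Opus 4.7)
The plan is to produce the hitting set as an independent transversal of a partition of $V(G)$ built from the structure of max cliques, using Lemma~\ref{Lopsided}. The argument rests on two structural inputs. First, a Hajnal-type intersection bound: for any two max cliques $K_1,K_2$ sharing a vertex $v$, we have $K_1\cup K_2\subseteq N[v]$, so $|K_1\cap K_2|\ge 2\omega-(\Delta+1)=\Delta-9\ge 5$, and intersecting max cliques differ in at most $5$ vertices on each side.

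Second, a ``single replacement'' fact coming from $d_1$-choosability. Fix a max clique $K$ and $u\in K$, and let $S_u=\{w\notin K:(K\setminus\{u\})\cup\{w\}\text{ is a clique}\}$. Each $w\in S_u$ satisfies $w\nonadj u$ (else $K\cup\{w\}$ is bigger), and any edge $w_1w_2$ inside $S_u$ together with $K\setminus\{u\}$ would yield a $(\Delta-3)$-clique, so $S_u$ is independent. Hence $G[K\cup S_u]\cong K_{\Delta-5}\vee E_{|S_u|+1}$. Because $G$ is $\Delta$-critical, $G$ contains no $d_1$-choosable induced subgraph, and with $t=\Delta-5\ge 9$ Lemma~\ref{K_tClassification} forces $\omega(E_{|S_u|+1})\ge|S_u|$, i.e., $|S_u|\le 1$: at most one external replacement per slot.

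I then partition the max cliques into groups $\mathcal{G}_1,\ldots,\mathcal{G}_r$ via the transitive closure of direct intersection. Using both ingredients, for each group $\mathcal{G}_j$ I would extract a set $A_j\subseteq V(G)$ of candidate hitters, contained in every $K\in\mathcal{G}_j$, such that $|A_j|$ is $\Omega(\Delta)$ and each $v\in A_j$ has only $O(1)$ neighbors outside the support $U_j=\bigcup_{K\in\mathcal{G}_j}K$. Extending $\{A_j\}$ to a partition of $V(G)$ by absorbing the (few) remaining vertices, I would verify the degree hypothesis of Lemma~\ref{Lopsided} with a suitable $s$: each $v\in A_j$ has $\ge\Delta-5$ neighbors inside its own part and only $O(1)$ edges crossing to other parts, while $|A_j|$ is comfortably larger than $2s$. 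The lemma then delivers an independent transversal that meets every $K\in\mathcal{G}_j$ (since any vertex of $A_j$ lies in every such $K$), giving the desired hitting set.

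The main obstacle is extracting a nonempty $A_j$ for every group. Because each slot of a base clique $K_0\in\mathcal{G}_j$ has at most one replacement, but $\mathcal{G}_j$ may collectively replace many slots, the naive core $\bigcap_{K\in\mathcal{G}_j}K$ can be empty---most acutely at $\Delta=14$, where the Hajnal bound is tight and chains of non-pairwise-intersecting cliques are possible. Ruling this out (or handling it by hand) likely requires either iterated applications of Lemma~\ref{K_tClassification} to larger replacement sets $(K\setminus X)\cup Y$ with $|X|=|Y|\ge 2$, or a separate treatment of solitary versus multi-clique groups. Once every $A_j$ is in hand, the degree condition for Lemma~\ref{Lopsided} is routine.
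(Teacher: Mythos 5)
Your overall strategy is the paper's: bound pairwise intersections of maximum cliques via the degree/Hajnal argument, use Lemma~\ref{K_tClassification} on induced joins to pin down the clique structure, group the maximum cliques, and extract a hitting set as an independent transversal of the group cores via Lemma~\ref{Lopsided}. Your single-replacement computation ($G[K\cup S_u]\cong \join{K_{\Delta-5}}{E_{|S_u|+1}}$, hence $|S_u|\le 1$) is correct, and the final degree check for the transversal lemma does go through with $s=\frac{\Delta}{2}-2$ once the cores are in hand.

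But the obstacle you flag at the end is a genuine gap, and it is exactly the part of the proof that carries the weight. Your single-replacement fact only excludes two maximum cliques differing in one slot in two ways; it does not exclude two maximum cliques $A,B$ with $|A\cap B|$ anywhere from $\Delta-9$ up to $\Delta-6$, nor does it prevent a clique from intersecting several others, so your groups $\mathcal{G}_j$ may indeed have empty core and the construction of $A_j$ never gets off the ground. The paper closes this in two steps you are missing. First, apply Lemma~\ref{K_tClassification} with $t=|A\cap B|\ (\ge \Delta-9\ge 5)$ to the induced join $G[A\cup B]=(A\cap B)\vee G[(A\setminus B)\cup(B\setminus A)]$: since this cannot be $d_1$-choosable, $\omega(G[A\cup B])\ge |A\cup B|-1$, which together with $\omega=\Delta-4$ forces $|A\cap B|=\Delta-5$ exactly (your ``differ in at most $5$ vertices'' becomes ``differ in exactly $1$''). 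Second, apply the same lemma to $J=A\cap B\cap C$ for a third maximum clique $C$ meeting $A$; the resulting arithmetic ($6>|J|=2\Delta-9-|A\cup B\cup C|>28-18$) is contradictory, so every maximum clique intersects at most one other. Only then does each group consist of at most two cliques with a common core of size at least $\Delta-5$, each core vertex having at most $5$ (resp.\ $4$) neighbors outside its core, and the transversal argument applies. Two smaller remarks: you do not need to extend the parts to a partition of all of $V(G)$ (doing so would wreck the degree hypothesis for the absorbed vertices); the transversal is taken only over the cores, and a hitting set need not be maximal. Also note the exceptional cases of Lemma~\ref{K_tClassification} must be checked to be vacuous, which is where $\Delta\ge 14$ enters.
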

\begin{proof}
Suppose the lemma is false, and let $G$ be a counterexample minimizing $\card{G}$.  
Consider distinct
intersecting maximum cliques $A$ and $B$.  Since a vertex in their intersection
has degree at most $\Delta$, we have $\card{A \cap B} \geq \card{A} + \card{B}
- (\Delta + 1) = \Delta - 9 \geq 5$.  Since $G$ contains no induced
$d_1$-choosable subgraph, letting $A\cap B=K_t$ in Lemma
\ref{K_tClassification} implies that $\omega(G[A\cup B])\ge|A\cup B|-1$. 
Hence $\card{A \cap B} = \omega-1 = \Delta - 5$.
Suppose $C$ is another maximum clique intersecting $A$;
let $U = A \cup B \cup C$ and $J = A\cap B \cap C$.   We use inclusion-exclusion
to bound $\card{U}$ and $\card{J}$.
First, $\card{U}=\card{A\cup B\cup C}=\card{A\cup B}+\card{C\setminus(A\cup B)}
\le \card{A\cup B}+\card{C\setminus A}=\card{A\cup B}+\card{C}-\card{C\cap A} \le (\Delta-5+1+1)+(\Delta-4)-(\Delta-5)=\Delta-2$.
Second, $\card{J} = \card{A\cap B}+\card{C} - \card{(A\cap B)\cup C} \ge
\card{A\cap B}+\card{C}-\card{U} \ge (\Delta - 5) + (\Delta - 4) - (\Delta-2)=
\Delta - 7\ge 7$.  

Since $\card{J}\ge 7$, by Lemma \ref{K_tClassification},
$\omega(G[U]) \geq \card{U} - 1$; so $C=A$ or $C=B$, a contradiction.  
Thus, every maximum clique intersects at most one other maximum
clique.  Hence we can partition the union of the maximum cliques into sets
$D_1, \ldots, D_r$ such that either $D_i$ is a $(\Delta-4)$-clique $C_i$ or
$D_i = C_i \cup \set{x_i}$ for a $(\Delta-4)$-clique $C_i$, where $x_i$ is
adjacent to all but one vertex of $C_i$.

For each $D_i$, if $D_i=C_i$, then let $K_i=C_i$.  If $D_i=C_i\cup\{x_i\}$, then
let $K_i=C_i\cap N(x_i)$.
Consider the subgraph $F$ of $G$ formed by taking the subgraph induced on the
union of the $K_i$ and then making each $K_i$ independent.  We
apply Lemma \ref{Lopsided} to $F$ with $s = \frac{\Delta}{2} - 2$.  We have two
cases to check, when $K_i = C_i$ and when $K_i = C_i \cap N(x_i)$.  In the
former case, $\card{K_i} = \Delta-4$ and for each $v \in K_i$ we have $d_F(v)
\leq \Delta(G) + 1 - (\Delta-4) = 5$.  Hence $d_F(v) \leq \frac{\Delta}{2} - 2
= \min\set{\frac{\Delta}{2} - 2, \Delta - 4 - (\frac{\Delta}{2} - 2)}$ since
$\Delta \geq 14$.  In the latter case, we have $\card{K_i} = \Delta - 5$ and
since every $v \in K_i$ is adjacent to $x_i$ and to the vertex in $C_i\setminus
K_i$, neither of which is in $F$, we have $d_F(v) \leq \Delta - (\Delta-4) =
4$.  This gives  $d_F(v) \leq \frac{\Delta}{2} - 3 = \min\set{\frac{\Delta}{2}
- 2, \Delta-5 - (\frac{\Delta}{2} - 2)}$ since $\Delta \geq 14$.  Now Lemma
\ref{Lopsided} gives an independent transversal $I$ of the $K_i$, which is a
hitting set.  
\end{proof}

Now we can prove the first of our two main results.  For convenience, we restate it.

\begin{named1}
Every graph with $\chi \geq \Delta \geq 13$ contains $K_{\Delta-3}$.
\end{named1}
\begin{proof}
Let $G$ be a counterexample minimizing $|G|$; note that $G$ is vertex critical.
By Corollary~\ref{base13}, we have $\Delta(G) \ge 14$.  
By Theorem~\ref{firstthm}, we know that $\omega(G)\ge \Delta(G)-4$.
Since $G$ contains no $K_{\Delta-3}$, we know that $\omega(G)=\Delta(G)-4$.
So let $I$ be a hitting set given
by Lemma~\ref{ourHittingLemma}, expanded to a maximal independent set.
Now $\omega(G-I)<\Delta(G) - 4$, $\Delta(G-I)\le \Delta(G)-1$, and $\chi(G-I)
\ge \chi(G)-1$.  If $\Delta(G-I) \le \Delta(G)-3$, then greedy coloring gives
$\chi(G-I) \le \Delta(G-I)+1 \le \Delta(G)-2$, so $\chi(G)\le \Delta(G)-1$.
If $\Delta(G-I) = \Delta(G)-2$, then $\chi(G-I)\le\Delta(G-I)$ by Brooks'
Theorem (since $\omega(G-I)<\Delta(G)-4$), so $\chi(G) \le \Delta(G)-1$.
So instead $\Delta(G-I) = \Delta(G) - 1$.
Now $\chi(G-I) \ge \Delta(G-I) \geq 13$ and $\omega(G-I)< \Delta(G-I) - 3$,
contradicting the minimality of $|G|$.
\end{proof}

We suspect that Theorem \ref{main1} holds for all $\Delta$.  By Theorem \ref{firstthm} and Theorem \ref{main1}, the following conjecture is only open when $\Delta \in \set{6,8,9,11,12}$.

\begin{conj}
Every graph with $\chi \ge \Delta$ contains $K_{\Delta-3}$.
\end{conj}

We conclude this section with a nice application of Theorem \ref{main1} to the
Borodin-Kostochka conjecture for vertex-transitive graphs.  Suppose $G$ is a
vertex-transitive graph with $\chi(G) \ge \Delta(G) \ge 15$.  Now $\omega(G)
\ge \Delta(G) - 3$ by Theorem \ref{main1}.  Since $G$ is vertex-transitive,
every vertex of $G$ is in a $K_{\Delta(G) - 3}$.  In \cite{denseneighborhoods},
it was proved that the Borodin-Kostochka conjecture holds for graphs where
every vertex is in a $K_{\frac23\Delta(G) + 2}$.  Now $\Delta(G) - 3 \ge
\frac23\Delta(G) + 2$ since $\Delta(G) \ge 15$, so we have proved the following.

\begin{cor}\label{BKTransitive}
Every vertex-transitive graph with $\chi \ge \Delta \ge 15$ contains $K_\Delta$.
\end{cor}

Corollary \ref{BKTransitive} should hold for $\Delta \ge 9$ and this may be much easier to prove than the full Borodin-Kostochka conjecture.  In a short note \cite{transitivenote}, we explore these ideas further and prove  Corollary \ref{BKTransitive} for $\Delta \ge 13$.   A more general conjecture comes out of these considerations which is worth mentioning because it implies Corollary \ref{BKTransitive} for $\Delta \ge 9$.

\begin{conj}
Every vertex-transitive graph satisfies $\chi \le \max \set{\omega, \ceil{\frac{5\Delta + 3}{6}}}$.
\end{conj}

\section{The Second Main Theorem}

In this section, we prove our second main theorem.  First, we prove a
lemma that follows from~\cite{CR1} about list coloring (we use it to
forbid a certain subgraph in a $\Delta$-critical graph). 

%
%

\begin{lemma}[\cite{CR1}]\label{mixedLemmaK3}
Let $G=\join{K_3}{E_2}$. If $L$ is a list assignment such that $\card{L(v)}
\geq d(v) - 1$ for all $v \in V(G)$ and for some $w\in V(K_3)$ and
some $x\in V(E_2)$ we have $\card{L(w)} \geq d(w)$ and $\card{L(x)}\ge d(x)$,
then $G$ has an $L$-coloring.
\end{lemma}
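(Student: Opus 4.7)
The plan is to imitate the proof of Lemma~\ref{mixedLemmaK4} essentially verbatim, replacing the $K_4$ by $K_3$ and carefully tracking the single extra unit of slack we are given on both the $K_3$ and $E_2$ sides. Write $V(K_3)=\{w,w_1,w_2\}$ and $V(E_2)=\{x,y\}$, where $|L(w)|\ge d(w)=4$, $|L(w_1)|,|L(w_2)|\ge d(w_i)-1=3$, $|L(x)|\ge d(x)=3$, and $|L(y)|\ge d(y)-1=2$.

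First I invoke the Small Pot Lemma: define $f$ by $f(w)=4$, $f(w_1)=f(w_2)=3$, $f(x)=3$, $f(y)=2$. All these values lie in $\{0,\dots,|G|-1\}=\{0,\dots,4\}$, so it suffices to show $f$-colorability assuming $|\mathrm{Pot}(L)|\le 4$. This is the key step that squeezes the lists into a tiny palette.

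Next I look for a common color on $x$ and $y$. Since $|L(x)|+|L(y)|\ge 3+2=5>4\ge|\mathrm{Pot}(L)|$, the intersection $L(x)\cap L(y)$ is nonempty, so I can pick $c\in L(x)\cap L(y)$ and color both $x$ and $y$ with $c$. After this step the remaining graph to color is the triangle on $\{w,w_1,w_2\}$, and each vertex of the triangle has lost at most one color (namely $c$) from its list; so the residual lists satisfy $|L'(w)|\ge 3$ and $|L'(w_1)|,|L'(w_2)|\ge 2$.

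Finally I finish greedily on the triangle, coloring in the order $w_1$, $w_2$, $w$. When it is $w_1$'s turn it has $\ge 2$ available colors; $w_2$ then has $\ge 2-1=1$ available; and $w$, with residual list of size $\ge 3$ and only two colored neighbors, has $\ge 1$ available. This produces the desired $L$-coloring. There is no real obstacle here: the only place where the argument can fail is at the common-color step, which is exactly where the strengthened hypothesis $|L(x)|\ge d(x)$ (one color above the baseline) plus the Small Pot bound gives the needed surplus. The proof should be only a couple of lines in its final form.
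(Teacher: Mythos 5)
Your proof is correct and follows exactly the same route as the paper's: apply the Small Pot Lemma to assume $\card{Pot(L)}\le 4$, note $\card{L(x)}+\card{L(y)}\ge 5$ forces a common color on $x$ and $y$, and then finish greedily on the triangle ending with $w$. The extra bookkeeping you include (verifying the residual list sizes on $K_3$) is just a spelled-out version of the paper's ``finish greedily, ending with $w$.''
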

\begin{proof}
Denote $V(E_2)$ by $\set{x, y}$. 
By the Small Pot Lemma, we assume $\card{Pot(L)} \leq 4<5\le \card{L(x)} + \card{L(y)}$.  
After coloring $x$ and $y$ the same, finish greedily, ending with $w$.
\end{proof}

In the rest of this section, we extend and refine the ideas in
Section~\ref{mozhan-partitions}.

\begin{defn}
Let $P$ be a minimum $(r_1, \ldots, r_s)$-partition of a graph $G$ with $\chi(G) = \Delta(G) = 1 + \sum_{i \in \irange{s}} r_i$.  Let $\S$ be a move sequence starting at $P$.
For a full club $S$ with respect to $\S$, the \emph{clubgroup} $\G_{\S}(S)$ of $S$ is the set consisting of $S$ and the full clubs to which $S$ is joined. 
\end{defn}

When the move sequence is clear from context, we write $\G(S)$ in place of
$\G_{\S}(S)$. Clearly if $R$ and $S$ are full clubs and $R \in \G(S)$, then $S
\in \G(R)$. By Lemma \ref{JoiningIsTransitive}, we know that if $R$, $S$, and
$T$ are full clubs, and $R \in \G(S)$ and $S \in \G(T)$, then $R \in \G(T)$. 
So, the set of full clubs with respect to $\S$ is partitioned into clubgroups.
We need a way of differentiating moves that are internal to a clubgroup and
moves that go from one clubgroup to another.  This motivates the following
definition of \emph{internal} and \emph{external} moves.

With the notation we have at this point, referring to objects like ``the clubgroup of the club of the active component'' is a bit unwieldy.  
So, we allow ourselves to write $\G_\S(A)$ in place of $\G_\S(\C_S(A))$.  

\begin{defn}
Let $P$ be a minimum $(r_1, \ldots, r_s)$-partition of a graph $G$ with $\chi(G)
= \Delta(G) = 1 + \sum_{i \in \irange{s}} r_i$.  Let $\S$ be a move sequence
starting at $P$.  Let $M = (P^a, v, i, P^b)$ be a move in $\S$, $A^a$ the active
component in $P^a$ and $A^b$ the active component in $P^b$.  Then move $M$ is
\emph{internal} if $\G_{\S}(A^a) = \G_{\S}(A^b)$. Otherwise,
$M$ is \emph{external}.  We write $\E(\S)$ for the subsequence of $\S$
consisting of all the external moves of $\S$.
\end{defn}

\begin{defn}
Let $P$ be a minimum $(r_1, \ldots, r_s)$-partition of a graph $G$ with
$\chi(G) = \Delta(G) = 1 + \sum_{i \in \irange{s}} r_i$.  Let $\S = ((P^1, v_1,
i_1, P^2), \ldots, (P^q, v_q, i_q, P^{q+1}))$ be a move sequence starting at
$P$.  Let $R$ be a full club of $\S$.  We say that the clubgroup $\G_\S(R)$ is
\emph{activated at least $k$ times} if there is a subsequence $((P^{a_1},
v_{a_1}, i_{a_1}, P^{a_1 + 1}), \ldots, (P^{a_k}, v_{a_k}, i_{a_k}, P^{a_k +
1})$ of $\E(\S)$ where the active club in $P^{a_i + 1}$ is in $\G_\S(R)$ for $i
\in \irange{k}$. 
\end{defn}

\begin{defn}
Let $P$ be a minimum $(r_1, \ldots, r_s)$-partition of a graph $G$ with
$\chi(G) = \Delta(G) = 1 + \sum_{i \in \irange{s}} r_i$.  Let $\S = ((P^1, v_1,
i_1, P^2), \ldots, (P^q, v_q, i_q, P^{q+1}))$ be a move sequence starting at
$P$. Let $R$ be a full club of $\S$. The \emph{external spread} of $R$
is \[\esp_{\S}(R) = \setbs{i}{(Q, v, i, Q') \in \E(\S) \text{ with }
\C^i(\A(Q))\in \G_\S(R)}.\]

\noindent The \emph{external spread} of $\S$ is $\esp(\S) = \max_R \card{\esp(R)}$ where the max is over all full clubs $R$ of $\S$.
\end{defn}

In an $(r_1, \ldots, r_s)$-partition of a graph $G$ a clubgroup containing $s - 1$ clubs is called a \emph{big} clubgroup.  A clubgroup with fewer than $s-1$ clubs is \emph{small}.
Our next key lemma is Lemma~\ref{ClubgroupActivation}, which is an analogue of
Lemma~\ref{ActiveAtMostThreeTimes}.
Intuitively, it says that clubgroups can be thought of much like clubs: in a
move sequence with external spread at most 2 (and each vertex moved at most
once), each clubgroup is activated at most 3 times.  The proof is similar to
that of Lemma~\ref{ActiveAtMostThreeTimes}.  Not suprisingly, we must first
prove an analogue of the helper lemma that played a key role in that proof.
This is Lemma~\ref{helperGen} which follows quickly from Lemma
\ref{HelperPrecursor}.

\begin{lemma}
\label{helperGen}
Let $P$ be a minimum $(r_1, \ldots, r_s)$-partition of a graph $G$ with
$\chi(G) = \Delta(G) = 1 + \sum_{i \in \irange{s}} r_i$.  Let \[\S = ((P^1,v_1,
i_1, P^2),\ldots, (P^q, v_q, i_q, P^{q+1}))\] be a move sequence starting at
$P$. Let $R$ and $S$ be full clubs of $\S$ and $t \in \irange{q + 1}$.  If $R_t
= \A(P^t)$, then
\begin{enumerate}
\item[(a)] if $u\in V(R_t)$ and $u$ has at least 2 neighbors in $S_t$, then $u$
is joined to $S_t$.
\item[(b)] if $u\in V(R_t)$ and $v\in V(S_t)$ and $u$ has at least 2 neighbors
in $S_t$ and $v$ has at least 2 neighbors in $V(\G(R_t))\setminus\{u\}$, then 
$v$ is joined to $V(\G(R_t))$.
\end{enumerate}
\end{lemma}

\begin{proof}
(a) is the same as (a) in Lemma~\ref{helper}; we only restate it here for
convenience.

(b): By symmetry, we may assume that $V(\G(R_t))$ intersects each of $P_1^t,
\ldots, P_{k-1}^t$ and none of $P_k^t, \ldots, P_s^t$.  Moreover, we assume
that $V(S_t) \subseteq P_k^t$.
Let $A_i = G\brackets{P_i^t}$ for $i \in \irange{k}$.  Let $H =
G\brackets{V(\set{A_1, \ldots, A_k})}$ and let $T_1$ be the component of $A_1$
containing $u$.  Plainly, $\chi(H) = \sum_{i \in \irange{k}} \chi(A_i)$.  
By Lemma \ref{GeneralMozhan}, $\chi(A_1 \setminus\{u\}) < \chi(A_1)$ and
$d_{A_k}(u) \le \chi(A_k) + 1$.    By Lemma \ref{HelperPrecursor} (a), vertex
$u$ has at least $\chi(A_k)$ neighbors in some component $T_k$ of $A_k$.  Since
$d_{A_k}(u) \le \chi(A_k) + 1$ and $u$ has at least two neighbors in $S_t$, we
must have $T_k = S_t$.

If $d_{A^*}(v) > 1 + \sum_{i \in \irange{k - 1}} \chi(A_i)$, then there exists
some part $P_q^t$ with $d_{P_q^t}(v) < r_q$.  By moving $v$ to $P_q^t$ and $u$
to $P^t_k$, we get a $(\chi(G) - 1)$-coloring of $G$, a contradiction.  
So $d_{A^*}(v)\le 1+\sum_{i\in \irange{k-1}}\chi(A_i)\le \card{T^*}$.
Similarly, $d_{A^*}(w)\le \card{T^*}$ for all $w\in T^*$.
To finish the proof of (b), we now apply Lemma \ref{HelperPrecursor} (c), with
$T^*=V(\G(R_t))$.
\end{proof}

\begin{lemma}
\label{ClubgroupActivation}
Let $P$ be a minimum $(r_1, \ldots, r_s)$-partition of a graph $G$ with $\chi(G) = \Delta(G) = 1 + \sum_{i \in \irange{s}} r_i$.   If
\[\S = ((P^1, v_1, i_1, P^2), \ldots, (P^q, v_q, i_q, P^{q+1}))\] is a move
sequence with $\esp(\S) \le 2$ and $v_i \ne v_j$ for all distinct $i, j \in
 \irange{q+1}$, then:
\begin{enumerate}
\item[(1)] every clubgroup of $\S$ is activated at most $3$ times; and
\item[(2)] every big clubgroup of $\S$ is activated at most $2$ times.
\end{enumerate}
\end{lemma}
\begin{proof}
Suppose the lemma is false and choose a move sequence
\[\S = ((P^1, v_1, i_1, P^2), \ldots, (P^q, v_q, i_q, P^{q+1}))\] 
\noindent for which it fails minimizing $q$.   By minimality of $q$ (and since
$\esp(\S)\le 2$), we have a length three subsequence $((P^a, v_a, i_a, P^{a+1}),
(P^b, v_b, i_b, P^{b+1}), (P^{c-1}, v_{c-1}, i_{c-1}, P^{c}))$ of $\S$ such that 
	\begin{enumerate}
		\item[(i)] $\G(\A(P^a)) = \G(\A(P^b)) = \G(\A(P^{c}))$ and
$\C^{a+1}(\A(P^{a+1})) = \C^{b+1}(\A(P^{b+1}))$; and
		\item[(ii)] there is at most one $(P^d, v_d, i_d, P^{d+1})$ in $\S$ with $a < d < b$ such that $\G(\A(P^d)) = \G(\A(P^a))$; and
		\item[(iii)] $\C^{a+1}(\A(P^{a+1}))$ is active at most $3$ times.
	\end{enumerate}
	
Let $X = \G(\A(P^a))$ and $Y = \C^{a+1}(\A(P^{a+1}))$.  We will show that $X$ is joined to
$Y$; this gives a contradiction, since we are assuming $Y$ is not in the
clubgroup of $X$.   
We may choose $c$ to be the smallest index in $\{b+1,...,q+1\}$ such that $X$
is active at stage $c$.
If there does not exist $(P^d, v_d, i_d, P^{d+1})$ in $\S$ with $a < d < b$ such
that $\C^d(\A(P^d)) = \C^a(\A(P^a))$, then let $d = b$.
The proof of (1) is nearly identical to the proof of
Lemma~\ref{ActiveAtMostThreeTimes}.  The only difference is that each
instance of Lemma~\ref{helper} in that proof is now replaced by
Lemma~\ref{helperGen}; so we omit the proof.

Now for the proof of (2).
 If a clubgroup is big, then each of its external moves goes to the same
part $X_i$ of the partition.  Thus, if a big clubgroup becomes active 3
times, then we again have the move subsequence $((P^a, v_a, i_a, P^{a+1}),
(P^b, v_b, i_b, P^{b+1}), (P^{c-1}, v_{c-1}, i_{c-1}, P^{c}))$, with properties (i), (ii),
and (iii) above.  Hence, the proof of (1) is also valid in this context, and
yields a proof of (2).
\end{proof}

Now we can prove our second main theorem (we restate it for convenience), which 
strengthens Theorem~\ref{krsone} for $\Delta \ge 10$.

\begin{thm}[Kostochka, Rabern, and Stiebitz \cite{KRS}]\label{krsone}
If $G$ is a critical graph with $\chi(G) \ge \Delta(G)$ and $\omega(G) <
\Delta(G)$, then 
$\omega(\H) \ge \Floor{\frac{\Delta(G) - 1}{2}}$.
\end{thm}

\begin{named2}
If $G$ is a graph with $\chi(G) \ge \Delta(G)$ and $\omega(G) < \Delta(G)$,
then $\omega(\H) \ge \Delta(G) - 4$ if $\Delta(G) \equiv 1\pmod 3$ and
$\omega(\H) \ge \Delta(G) - 5$ otherwise.
\end{named2}
\begin{proof}
Suppose the theorem is false and let $G$ be a minimal counterexample.  Note 
that $G$ is a critical graph with $\chi(G) \ge \Delta(G)$, $\omega(G) <
\Delta(G)$, and $\omega(\H) < \Delta(G) - 4$ if $\Delta(G) \equiv 1\pmod 3$ and
$\omega(\H) < \Delta(G) - 5$ otherwise.  By Brooks' Theorem, we have $\chi(G) =
\Delta(G)$. By Theorem \ref{krsone}, $\Delta(G) \ge 10$.

Let $s = \Floor{\frac{\Delta(G) - 1}{3}}$ and $r_1, \ldots, r_s \in \set{3,4}$
such that $\Delta(G) = 1 + \sum_{i \in \irange{s}} r_i$.  Since $G$ has an
$(r_1, \ldots, r_s)$-partition, we can let $P$ be a minimum $(r_1, \ldots,
r_s)$-partition of $G$.  Let $\S = ((P^1, v_1, i_1, P^2), \ldots, (P^q, v_q,
i_q, P^{q+1}))$ be a move sequence starting at $P$ that never moves a low vertex
within a clubgroup, with $\esp(\S) \le 2$,
with $v_i \ne v_j$ for all distinct $i, j \in \irange{q+1}$, and,
subject to that, $P$ has the maximum number of external moves.
Let $A = \A(P^{q+1})$.  Hereafter $\G(\C(A))$ denotes $\G(\C^{q+1}(A))$.

Suppose $\G(\C(A))$ is small. By Lemma \ref{ClubgroupActivation},
$\G(\C(A))$ is activated at most $3$ times in $\S$.  Since $r_{i_q} \ge 3$,
there is $x \in V(A)$ such that $x \not \in \setbs{v_t}{t \in \irange{q}}$,
i.e., since $A$ has at least 4 vertices, some $x\in V(A)$ has not yet moved.
Since $\G(\C(A))$ is small, there exist at least two indices
$j_1,j_2\in\irange{s}$ such that $\G(\C(A))$ has no club in part $j_1$ and no
club in part $j_2$. Now for some $i\in\{1,2\}$, we have $d_{P_{j_i}}(x)\le
r_{j_i}$.  By Lemma~\ref{GeneralMozhan}(2), we know that 
$x$ is joined to a full club in part $j_i$, so moving $x$ to part
$j_i$ is a valid move.  

We show that in all cases, we can extend the move sequence $\S$ to a sequence
$\S'$ by moving $x$ to part $j_i$;
we need only to verify that after moving $x$ to part $j_i$, the resulting
sequence $\S'$ satisfies  $\esp(\S') \le 2$.
If we presently have $\esp(\S)\le 1$, then clearly $\esp(\S') \le 2$.  If
instead the present sequence has  $\esp(\S) \le 2$,
then we can choose $j_1$ and $j_2$ such that $\S$ contains external
moves to both.  In that case, moving $x$ to one of the parts will not increase
the external spread.  So in all cases, we contradict the maximality of the move
sequence.


Hence $\G(\C(A))$ is big.  By Lemma \ref{ClubgroupActivation},
$\G(\C(A))$ is activated at most $2$ times in $\S$.  Consider $K = \bigcup_{Z
\in \G(\C(A))} V(Z_{q+1})$.  Since $\G(\C(A))$ is big, $K$ is a clique that has
vertices in all but one part of $P^{q+1}$.  By renumbering if necessary, we may
assume that $K$ has vertices in each of $P^{q+1}_1, \ldots, P^{q+1}_{s-1}$; 
so $\card{K} = 1 + \sum_{i \in \irange{s-1}} r_i$.  Hence $\card{K} =
\Delta(G) - 3$ if $\Delta(G) \equiv 1 \pmod 3$ and $\card{K} \ge
\Delta(G) - 4$ otherwise.  In either case, $K$ has at least two low vertices,
since $G$ is a counterexample to the theorem.

If $K$ contains a low vertex $x$ that has not moved, i.e., $x \in K \setminus
\setbs{v_t}{t \in \irange{q}}$,
then we can move $x$ to part $s$ (by Lemma~\ref{GeneralMozhan}(2)), which
contradicts the fact that $\S$ has maximum length.
Hence, $K$ does not contain an unmoved low vertex.  
Since low vertices are not moved within clubgroups, and each low vertex in $K$
has already moved, we know that each was moved externally.
So, since $\G(\C(A))$ is
activated at most $2$ times in $\S$, we know that $K$ has exactly two low
vertices, $v$ and $w$.  Since both $v$ and $w$ have moved, $\G(\C(A))$ is activated exactly $2$ times; 
one time when $v$ was moved in and one time when $w$ was moved in.
Therefore, $\S$ contains external moves $(P^{a_1}, v,
i_{a_1}, P^{a_1 + 1})$ and $(P^{a_2}, w, i_{a_2}, P^{a_2 + 1})$ and in both
$P^{a_1 + 1}$ and $P^{a_2 + 1}$  the clubgroup $\G(\C(A))$ contains the active
club (possibly different each time).  
By symmetry, assume $a_1 < a_2$ and so
$a_2 = q$.

Let $B$ be the active component in $P^{q}$.  Since $w \in V(B)$ and $w$ is
adjacent to at least $\Delta(G) - 5$ vertices in $K$, we see that $\C(B)$'s
clubgroup is $\set{\C(B)}$ (otherwise $w$ would be adjacent to
more than 5 vertices coming from $\C(B)$'s clubgroup, which is too many).
Suppose that $V(B)$ contains a high vertex that is unmoved, i.e., $z \in V(B)
\setminus \setbs{v_t}{t \in \irange{q-1}}$.  Since $\Delta(G) \ge 10$,
we have $s \ge 3$.  So there is an external move $M = (P^q, z, i, Q)$ where $i
\in \irange{s-1}$.  Consider the move sequence formed from $\S$ by removing the
last move and appending $M$.  By our considerations in the previous paragraph,
this move sequence can be extended (the active club now contains an unmoved low
vertex, since the last vertex moved is high), contradicting the maximality
condition on $\S$.  So, every $z \in V(B) \setminus \setbs{v_t}{t \in
\irange{q-1}}$ is low.  

Since $w$ is low, for every move $(Q, z, i, Q')$ in
$\S$ where $\C(B)$ is active in $Q$, we must have $z \in K$;  otherwise $w$
would have at least $\Delta$ neighbors.
In particular, there are at most two such moves since $\G(\C(A))$ is activated
at most twice.  So $B$ contains an unmoved vertex, i.e., $\card{V(B) \setminus
\setbs{v_t}{t \in \irange{q}}} \ge 1$.

Let $(P^{a_3}, u, i_{a_3}, P^{a_3 + 1})$ be the first external move in $\S$
after $(P^{a_1}, v, i_{a_1}, P^{a_1 + 1})$.  Let $A'$ be the active component
in $P^{a_3}$ and consider $K' = \bigcup_{Z \in \G(\C(A'))} V(Z_{a_3})$.  
Since $\card{K'}=\card{K}$, as we saw before for $K$, also $K'$ has at least
two low vertices $v, w'$.  If $u$ is high, then $K$ would contain low vertices
$v, w, w'$, a contradiction.  So $u$ is low; in fact, $u=w'$. 

We show that  $\C(\A(P^{a_3 + 1})) = \C(B)$. Since $v$ is low, we have the move
$M'=(P^{a_3}, v, s, Q')$. Let $B' = \A(Q') \setminus \set{v}$. Since $v$ is
adjacent to $w$ (and $v$ is low), we must have $w \in V(B')$.  So $\C(B) =
\C(B')$.  Since $\C(B')$ is active at most twice, $v$ has at least $\card{B'} -
2 > 0$ neighbors in $\C(B')_{q+1}$.  Since $v$ is low, we have the move $M =
(P^{q+1}, v, s, Q)$.  Now Lemma \ref{GeneralMozhan}, part (2) shows that
$\set{v} \cup V(\C(B')_{q+1})$ induces a $K_{r_s + 1}$.  But $u \in P^{q+1}_s$
and $v$ is adjacent to $u$, so $u \in V(\C(B')_{q+1})$.  Therefore,
$\C(\A(P^{a_3 + 1})) = \C(B') = \C(B)$.
 
Now we have the $K_3$ on $\set{u,v,w}$ joined to a set of vertices $T$ with
$\card{T} = \Delta(G) - 3$.  Namely, $T=(V(K)\setminus\{v,w\})\cup
(V(B)\setminus\{u\})$.
Moreover, since $\card{V(B) \setminus
\setbs{v_t}{t \in \irange{q}}} \ge 1$, there is a low vertex in
$V(B\setminus\{v_q,u\})$ and $V(B\setminus\{v_q,u\}) \subseteq T$.  So, by
Lemma \ref{mixedLemmaK3}, $\set{u,v,w} \cup T$ induces a $K_{\Delta(G)}$, a
contradiction.
\end{proof}

We conjecture that the previous theorem actually holds with $\omega(\H) \geq \Delta
- 5$ replaced by $\omega(\H) \geq \Delta - 4$.  
In \cite{Rabern2}, the second author proved this result for $\Delta = 6$; 
later in \cite{KRS} it was proved for $\Delta=7$.  The condition
$\omega(\H) \geq \Delta - 4$ would be tight since the graph $O_5$ in Figure
\ref{fig:O5} is a counterexample to  $\omega(\H) \geq \Delta - 3$ when
$\Delta=5$.  In fact, it was shown in \cite{KRS} that $O_5$ is the only
counterexample to $\omega(\H) \geq \Delta - 3$ when $\Delta=5$.

\begin{conj}
Let $G$ be a graph. 
If $\chi\ge\Delta$, then $\omega\ge\Delta$ or $\omega(\H)\ge \Delta-4$.
\end{conj}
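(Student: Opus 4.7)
The natural plan is to adapt the proof of Theorem~2, aiming to weaken its hypothesis from $\omega(\H)\le\Delta-6$ to $\omega(\H)\le\Delta-5$. Assume $G$ is a vertex-critical counterexample with $\omega\le\Delta-1$ and $\omega(\H)\le\Delta-5$. Write $\Delta-1$ as a sum of 3's and 4's using as few 4's as possible: none when $\Delta\equiv 1\pmod 3$ (which is Corollary~\ref{cor3}), one when $\Delta\equiv 2\pmod 3$, and two when $\Delta\equiv 0\pmod 3$. Fix $\vec r$ with these entries, apply Lemma~\ref{MozhanPartitionsExist}, and run the refined send-out process with the high-over-low preference used in the proof of Theorem~2. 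Claims~\ref{claim1New}--\ref{claim3New} transfer verbatim, so the process must terminate at a big clubgroup $A$ that spans all but one clubhouse $V_\ell$.

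If $r_\ell=3$, then $A$ is a $K_{\Delta-3}$, and since $\omega(\H)\le\Delta-5$ the first-activation clique of size $\Delta-3$ contains at least two low vertices; by the high-over-low preference, at most one has been moved by the time $A$ becomes active again, so an unmoved low vertex remains available, contradicting termination. Hence the only case to treat is $r_\ell=4$ with $A$ equal to $K_{\Delta-4}$, and the goal reduces to showing that the intermediate clubgroup $B\subseteq V_\ell$ is fully joined to $A$, which would yield $\omega\ge\Delta$.

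The main obstacle is precisely this final step. Theorem~2's argument used three forcing conclusions---that the first vertex $v$ moved into $A$, the first vertex $u$ moved out of $A$, and the second vertex $w$ moved into $A$ are all low---each of which exploited the stronger assumption $\omega(\H)\le\Delta-6$ (equivalently, at least two low vertices in the first-activation clique of size $\Delta-4$). Under $\omega(\H)\le\Delta-5$ only one low vertex in this clique is guaranteed, so those forcing steps may fail outright. I would try to recover them via a finer preference rule that also tracks \emph{which} clubhouse each previously moved vertex came from, combined with a strengthening of Lemmas~\ref{mixedLemmaK3} and~\ref{mixedLemmaK4}: a $d_1$-choosability statement to the effect that $\join{K_s}{B}$ is $L$-colorable under appropriate one-extra-list conditions when $B$ is $E_3$ or $E_4$ with one designated dominating vertex. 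This essentially bumps the exponent in Lemma~\ref{K_tClassification} by one and is where I expect the real difficulty to lie.

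A secondary obstacle appears in the $\Delta\equiv 0\pmod 3$ subcase, where two size-4 clubhouses are present and the sending clubgroup $B$ itself may have size 4; here the forcing must happen across a $K_4$ rather than a $K_3$ and involves an additional candidate low vertex on the $B$ side, but the global count $|A\cup B|=\Delta$ used at the end of Theorem~2 still applies. I would attack the $d_1$-choosability strengthening first; once it is in hand, plugging it into the Mozhan-partition framework should complete both subcases.
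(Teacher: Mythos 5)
This statement is labeled as a \emph{Conjecture} in the paper, and the authors do not prove it; the only case they settle is $\Delta=3k+1$ with $k\ge 3$ (their Corollary~\ref{cor3}), which follows because there the Mozhan partition can be built entirely from clubhouses of size $3$, so a big clubgroup already induces a $K_{\Delta-3}$ and hence contains two low vertices even under the weaker hypothesis $\omega(\H)\le\Delta-5$. Your proposal correctly identifies this, correctly reduces the remaining work to the case where the exceptional clubhouse $V_\ell$ has $r_\ell=4$, and correctly diagnoses why the proof of Theorem~2 breaks there: with $A$ a $K_{\Delta-4}$ and only $\omega(\H)\le\Delta-5$ assumed, one can no longer guarantee two low vertices in $A$, so the chain of deductions forcing $v$, $u$, and $w$ to be low collapses.

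But what you have written is a plan of attack, not a proof. The two ingredients you invoke to rescue the argument --- a ``finer preference rule'' tracking the provenance of moved vertices, and a strengthened $d_1$-choosability lemma for $\join{K_s}{B}$ with $B$ an $E_3$ or $E_4$ plus a dominating vertex --- are neither formulated precisely nor proved, and you explicitly flag the second as ``where I expect the real difficulty to lie.'' That difficulty is genuine: Lemma~\ref{K_tClassification} shows that $\join{K_t}{E_3}$ fails to be $d_1$-choosable for $t=5$, so the obvious strengthening is false in general and any usable version would need carefully chosen ``one extra color'' hypotheses whose availability in the Mozhan framework you have not verified. As it stands the proposal establishes nothing beyond what Corollary~\ref{cor3} already gives, which is consistent with the paper leaving the statement open.
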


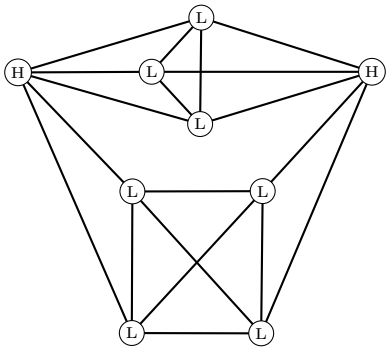
\begin{figure}[hbt]
\centering
\begin{tikzpicture}[scale = 10]
\tikzstyle{VertexStyle}=[shape = circle, minimum size = 1pt, inner sep = 1.2pt, draw]
\Vertex[x = 0.270266681909561, y = 0.890800006687641, L = \tiny {L}]{v0}
\Vertex[x = 0.336266696453094, y = 0.962799992412329, L = \tiny {L}]{v1}
\Vertex[x = 0.334666579961777, y = 0.821199983358383, L = \tiny {L}]{v2}
\Vertex[x = 0.56306654214859, y = 0.890800006687641, L = \tiny {H}]{v3}
\Vertex[x = 0.244666695594788, y = 0.731600046157837, L = \tiny {L}]{v4}
\Vertex[x = 0.417866677045822, y = 0.732000052928925, L = \tiny {L}]{v5}
\Vertex[x = 0.243866696953773, y = 0.543200016021729, L = \tiny {L}]{v6}
\Vertex[x = 0.415866762399673, y = 0.542800068855286, L = \tiny {L}]{v7}
\Vertex[x = 0.0926666706800461, y = 0.890000000596046, L = \tiny {H}]{v8}
\tikzstyle{EdgeStyle}=[]
\Edge[](v1)(v0)
\tikzstyle{EdgeStyle}=[]
\Edge[](v2)(v0)
\tikzstyle{EdgeStyle}=[]
\Edge[](v3)(v0)
\tikzstyle{EdgeStyle}=[]
\Edge[](v2)(v1)
\tikzstyle{EdgeStyle}=[]
\Edge[](v3)(v1)
\tikzstyle{EdgeStyle}=[]
\Edge[](v2)(v3)
\tikzstyle{EdgeStyle}=[]
\Edge[](v5)(v4)
\tikzstyle{EdgeStyle}=[]
\Edge[](v6)(v4)
\tikzstyle{EdgeStyle}=[]
\Edge[](v6)(v5)
\tikzstyle{EdgeStyle}=[]
\Edge[](v6)(v7)
\tikzstyle{EdgeStyle}=[]
\Edge[](v7)(v4)
\tikzstyle{EdgeStyle}=[]
\Edge[](v7)(v5)
\tikzstyle{EdgeStyle}=[]
\Edge[](v4)(v8)
\tikzstyle{EdgeStyle}=[]
\Edge[](v6)(v8)
\tikzstyle{EdgeStyle}=[]
\Edge[](v5)(v3)
\tikzstyle{EdgeStyle}=[]
\Edge[](v7)(v3)
\tikzstyle{EdgeStyle}=[]
\Edge[](v0)(v8)
\tikzstyle{EdgeStyle}=[]
\Edge[](v1)(v8)
\tikzstyle{EdgeStyle}=[]
\Edge[](v2)(v8)
\end{tikzpicture}
\captionsetup{width=0.8\textwidth}
\caption{The graph $O_5$ is a $\Delta$-critical graph with $\Delta=5$ and $\omega(\H)=1$.}
\label{fig:O5}
\end{figure}

\section{Algorithms}
All of our coloring proofs do translate into algorithms to construct the
colorings.  However these algorithms cannot obviously be made to run in
polynomial time.  Attempts to do so encounter two main obstacles.  The first
comes in our proof of Theorem~\ref{firstthm}, when we consider a critical
subgraph $H$ of our given graph $G$.  We do not know an efficient
algorithm to find such a critical subgraph; however, we will see how to overcome
this difficulty.  Our second obstacle comes from King's Lopsided Transversal
Lemma.  While his proof is constructive, the algorithm it implies may require
exponential time.  We are not aware of any workaround to efficiently find our
hitting set; however, when $\Delta$ is sufficiently large,
we can use an idea of Alon instead.  We implement a modified version of the algorithm from
Theorem~\ref{firstthm}.

\begin{thm}\label{FirstAlgorithm}
There is a $\mathcal{O}(V^2 E^2)$ time graph algorithm that finds either a $(\Delta-1)$-coloring or a clique on $\Delta-4$ vertices ($\Delta-3$ vertices if
$\Delta\equiv 1\pmod 3$).
\end{thm}
\begin{proof}
Let $G$ be an $n$-vertex graph with $\Delta\ge 10$, and let $I$ be a maximal
independent set in $G$.  Let $G_0 = G-I$, and note that $\Delta(G_0) \le
\Delta(G)-1$.  Lov{\'a}sz's proof of Brooks' theorem \cite{lovasz1975three} can be implemented in time $\mathcal{O}(V + E)$ (see \cite{baetz2001brooks}).  
Applying this to $G_0$ we either get a $\Delta(G)$-clique or a $(\Delta(G) -
1)$-coloring of $G_0$.  In the former case, we are done, so suppose we have a
$(\Delta(G) - 1)$-coloring $\phi$ of $G_0$.

Let $v$ be an arbitrary vertex in $I$ and put $G_1 = G[V(G_0)\cup\{v\}]$.  We give an algorithm that either finds a $(\Delta(G)-1)$-coloring of $G_1$ or a clique on $\Delta(G)-4$ vertices ($\Delta(G)-3$ vertices if
$\Delta(G)\equiv 1\pmod 3$). Iterating this gives the desired algorithm.

Note that $G_1$ has an $(r_1,\ldots,r_s)$-partition $P$, where $s=\Floor{\frac{\Delta(G)-1}3}$ and
$r_1,\ldots,r_s\in\{3,4\}$; choose an arbitrary such partition which respects
the color classes of $\phi$.  Now we will construct a move sequence as in the
proof of Theorem~\ref{firstthm}, treating the resulting partitions as if they
were minimum partitions.  For each partition arising from the move sequence, we check whether any
property in Lemma~\ref{GeneralMozhan} is violated; if some property is violated
for a partition $P$, then we can modify $P$ to form a new partition $P'$ such
that $P'$ has fewer edge within parts, i.e., $\sigma(P')<\sigma(P)$.  When this
happens, we begin our move sequence anew, starting from $P'$.  Eventually, we
will reach a partition and a move sequence that does not allow us
to reduce the number of edges within parts.
Such a move sequence will terminate with either
(1) a clique on $\Delta(G)-4$ vertices ($\Delta(G)-3$ vertices if
$\Delta(G)\equiv 1\pmod 3$) or (2) a $(\Delta(G)-1)$-coloring of $G_1$. 
In the case of (1), our algorithm halts.  In the case of (2), we add a new
vertex $v'$ from $I\setminus\{v\}$ and continue.  

So, we need only analyze the running time.  Each move sequence has length at most $n$, since each vertex
moves at most once.  After adding a vertex, we can reduce the number of edges
within parts at most $\card{E(G)}$ times.
Hence, after we add a new vertex from $I$ to our partition, we need at most
$n\card{E(G)}$ moves until we find either a big clique or a
$(\Delta(G)-1)$-coloring.  After each move, we can verify that the resulting
partition satisfies all the properties of Lemma~\ref{GeneralMozhan} (or doesn't) and find a vertex to swap with
in $\mathcal{O}(V + E)$ time.  Since we need to do this at most $n|I|\card{E(G)}$ times, the running time of the algorithm is $\mathcal{O}(V^2 E^2)$.
\end{proof}

When $\Delta\not\equiv 1\pmod 3$, 
Theorem \ref{FirstAlgorithm} only finds a $K_{\Delta-4}$;
but Theorem \ref{main1} guarantees a $K_{\Delta-3}$ when $\Delta \ge 13$. To
get an algorithmic version of this result, we need to  efficiently find a
hitting set when $\chi=\Delta$ and
$\omega=\Delta-4$. We will show how to do this when $\Delta$ is sufficiently large. 
The proof we present here works for $\Delta\ge 37$. We also sketch how to refine this idea to work for $\Delta\ge 33$. Further, using a result of Kolipaka, Szegedy and Xu \cite{kolipaka2012sharper}, we show how to get down to $\Delta\ge 26$.  The general idea is to find a set of
disjoint cliques $\A=\{A_1,A_2,\ldots\}$ such that $\card{A_i}$ is large for
all $i$ and each maximum clique contains some $A_i$.  Following an idea of
Alon, we choose one vertex uniformly at random from each $A_i$ and use the
Lovasz Local Lemma to prove that with positive probability the chosen vertices
form an independent set.  Our proof uses one classical lemma each from
Hajnal~\cite{Hajnal} and Kostochka~\cite{Kostochka}.

\begin{lemma}[Hajnal~\cite{Hajnal}]
\label{hajnal}
If $\S$ is a collection of maximum cliques in a graph $G$, then
$$\card{\bigcup\S} + \card{\bigcap\S} \ge 2\omega.$$
\end{lemma}
\begin{proof}
We use induction on $\card{\S}$; the base case $\card{\S}=1$ is trivial.
Let $S_1\in \S$ and $\S'=\S-S_1$.
Consider the set $(\cap\S'\setminus S_1)\cup (S_1\cap (\cup \S'))$, which induces a clique.
Since $S_1$ is a maximum clique, $\card{S_1}\ge\card{(\cap\S'\setminus
S_1)\cup (S_1\cap (\cup \S'))}$, which yields $\card{S_1\setminus(\cup
\S')}\ge\card{(\cap \S')\setminus S_1}$.  By hypothesis, $\card{\cup
\S'}+\card{\cap \S'}\ge 2\omega$.  Adding this to the previous inequality
gives the desired result.
\end{proof}

Now we need the following definition.  Given a collection $\S$
of sets, the \emph{intersection graph} $X_{\S}$ has one vertex for each set of
$\S$ and two vertices are adjacent if their sets intersect.

\begin{lemma}[Kostochka~\cite{Kostochka}]
\label{kostochka}
Let $G$ be a graph with $\omega(G)>\frac23(\Delta(G)+1)$.
If $\S$ is a collection of maximum cliques in $G$ and the intersection graph
$X_{\S}$ is connected, then $\card{\bigcap\S}\ge 2\omega(G) - (\Delta(G)+1)$.
\end{lemma}
\begin{proof}
We use induction on $\card{\S}$; the base case $\card{S}=1$ is trivial.
The key is to show that $\card{\bigcap\S}>0$, for then $\card{\bigcup\S}\le
\Delta(G)+1$, so the lemma follows directly from Lemma~\ref{hajnal}.
Let $S_1\in\S$ be a noncutvertex of $X_{\S}$, and choose $S_2\in \S$ that
intersects $S_1$.  Lemma~\ref{hajnal} for the set $\{S_1,S_2\}$ implies
$\card{S_1\setminus S_2}=\card{S_1}-\card{S_1\cap S_2} \le
\omega(G)-(2\omega(G)-(\Delta(G)+1))=\Delta(G)+1-\omega(G)$.
Let $\S'=\S-S_1$.  Now $X_{\S'}$ is connected, so by
hypothesis, the lemma holds for $\S'$.  Choose $v\in \bigcap \S'$.  Now
$\card{\bigcup \S'}\le d_G(v)+1\le \Delta(G)+1$.  Thus, $\card{\bigcup \S}\le
\card{\bigcup \S'}+\card{S_1\setminus S_2}\le
(\Delta(G)+1)+(\Delta(G)+1-\omega(G))<2\omega(G)$.  By Lemma~\ref{hajnal}, 
$\card{\bigcap S}>0$, so the lemma follows.
\end{proof}

In \cite{Kostochka}, Kostochka used Lemma \ref{hajnal} and Lemma \ref{kostochka} to prove that a hitting set always exists when $\omega \ge \Delta + \frac32 - \sqrt{\Delta}$.   Using an independent transversal result of Haxell \cite{haxell2001note}, this was improved to $\omega \ge \frac34(\Delta + 1)$ in \cite{Rabern} and finally to the best possible $\omega > \frac23(\Delta + 1)$ in \cite{King}.  Using an independent transversal result of Alon~\cite{Alon} (see also~\cite{AS}, p.~70), we get $\omega\ge\frac{2e+1}{2e+2}(\Delta+1)$.  Since Alon's proof is based on the Local Lemma, we can use the efficient algorithms developed by Moser and Tardos \cite{moser2010constructive}.

\begin{lemma}
\label{Alonlemma}
If $G$ is a graph with $\omega\ge\frac{2e+1}{2e+2}(\Delta+1)$, then $G$ contains
an independent set $I$ such that $I$ intersects every maximum clique in $G$. 
\end{lemma}
\begin{proof}
Let $\S$ be the set of maximum cliques in $G$ and let $\S_i$ be the set of
vertices in one component $C_i$ of $X_{\S}$.  For each $i$,
Lemma~\ref{kostochka} gives $\card{\bigcap\S_i}\ge 2\omega-(\Delta+1)\ge
\frac{e}{e+1}(\Delta+1)$.

Let $k=\ceil{\frac{e}{e+1}(\Delta+1)}$.
For each component $C_i$, let $A_i$ be a set of $k$ vertices that lie in every
clique of $C_i$.  Use the Local Lemma (see \cite{AS}, p.~64--65) to choose the
desired independent set.  From each $A_i$, choose a vertex uniformly at random.
For each edge $uv$ with $u\in A_i$ and $v\in A_j$ (and $i\ne j$), let $E_{uv}$
be the bad event that both $u$ and $v$ are chosen for $I$; event $E_{uv}$
occurs with probability $p=1/(\card{A_i}\card{A_j})=k^{-2}$.  Each $E_{uv}$ is
independent of all other bad events except for those corresponding to edges
with an endpoint in $A_i$ or $A_j$.  Since each $u$ has at least $\omega - 1$ neighbors in $\S_i$ and $v$ has at least $\omega - 1$ neighbors in $\S_j$, the degree $d$ of $E_{uv}$ in the
dependency graph is at most $(\Delta + 1 - \omega) (|A_i| + |A_j|) - 1 \le \frac{2k}{2e+2}(\Delta + 1) - 1 = \frac{k}{e+1}(\Delta + 1) - 1$.  This gives $ep(d+1) \le 1$, so the desired independent set $I$ exists.
\end{proof}

\begin{cor}\label{BiggerDeltaAlgorithm}
If $G$ is a graph with $\Delta\ge 37$ and $\omega=\Delta-4$, then $G$ contains
an independent set $I$ such that $I$ intersects every maximum clique in $G$.
Furthermore, $I$ can be found in polynomial time.
\label{hittingsetcor}
\end{cor}
\begin{proof}
If $\Delta \ge 37$, then we have $\omega = \Delta - 4 \ge \frac{2e+1}{2e+2}(\Delta+1)$, 
so we can apply Lemma~\ref{Alonlemma}.  All that remains is to
show that we can implement its proof in polynomial time.  We can find the set
of all maximum cliques by considering each $(\Delta-4)$-element subset of the
closed neighborhood of each vertex.  We use a union-find algorithm to find the
components of the intersection graph of this set of maximum cliques.
Now consider a set $\S$ of maximum cliques such that the intersection graph
$X_{\S}$ is connected.  
We can slightly modify the union-find algorithm so that it also returns $\cap \S$.
To now find our hitting set, we apply the algorithm for the Local Lemma from Moser and Tardos \cite{moser2010constructive}.
\end{proof}

With a more complicated algorithm we can do better.  Specifically, instead of using Lemma \ref{hajnal} and Lemma \ref{kostochka}, we use Lemma \ref{K_tClassification} as in the proof of Lemma \ref{ourHittingLemma}.  Basically, we just need to do a preprocessing step where we find and remove all $d_1$-choosable induced subgraphs on at most $9$ vertices (we can color them after coloring the rest).  Once we have a graph with none of these $d_1$-choosable induced subgraphs, we know, as in the proof of Lemma \ref{ourHittingLemma}, that the components of $X_\S$ have at most two vertices.  So, we can replace our estimate $\card{\bigcap\S_i}\ge 2\omega-(\Delta+1)$ with $\card{\bigcap\S_i}\ge \omega - 1$.  This improves the needed condition in Lemma \ref{Alonlemma} to $\omega\ge\frac{2e}{2e+1}\Delta + 1$ and thus allows Corollary \ref{BiggerDeltaAlgorithm} to work for $\Delta \ge 33$.

Using a recent result of Kolipaka, Szegedy and Xu \cite{kolipaka2012sharper} we can do a bit better.  
The idea is that the local lemma can be strengthened when the dependency graph has nice structure.  In our case, the dependency graph is the line graph of a multigraph (the multigraph formed by contracting all the $A_i$ in $G\brackets{\bigcup_i A_i}$).  Because of this structure, we may apply the Clique Lov{\'a}sz Local Lemma from \cite{kolipaka2012sharper} to prove Lemma \ref{Alonlemma} with $\omega\ge\frac{4}{5}\Delta + 1$.  Since there is an efficient algorithm for the Clique Lov{\'a}sz Local Lemma as well, we get Corollary \ref{BiggerDeltaAlgorithm} for $\Delta\ge 26$.   So, we can prove the following conjecture for $\Delta \ge 26$.

\begin{conj}\label{AlgoConjecture}
For $\Delta \ge 13$, there is a polynomial time graph algorithm that finds either a $(\Delta-1)$-coloring or a clique on $\Delta-3$ vertices.
\end{conj}

\section*{Acknowledgments}
Thanks to the referees for numerous helpful suggestions, which markedly improved
the paper.

\end{document}